\documentclass[10pt,reqno]{amsart}

\usepackage{amsmath,amssymb,mathrsfs,graphicx,enumitem,verbatim}

\usepackage[pdftex=true,
	bookmarks=true,
	bookmarksopen=true,
	bookmarksnumbered=true,
	pdftoolbar=true,
	pdfmenubar=true,
	pdffitwindow=false,
	pdfstartview={FitH},
	colorlinks=true,
	linkcolor=red,
	citecolor=blue,
	breaklinks=true]{hyperref}

\allowdisplaybreaks

\numberwithin{equation}{section}

\newtheorem{thm}{Theorem}[section]
\newtheorem{thmi}{Theorem}
\newtheorem{prop}[thm]{Proposition}
\newtheorem{lemma}[thm]{Lemma}

\newtheorem*{thm*}{Theorem}
\newtheorem*{prop*}{Proposition}
\newtheorem*{cor*}{Corollary}
\newtheorem*{conj*}{Conjecture}

\theoremstyle{definition}
\newtheorem{definition}[thm]{Definition}

\theoremstyle{remark}
\newtheorem{rmk}[thm]{Remark}

\newlength{\enummargin}
\setlength{\enummargin}{1cm}


\newcommand{\mc}{\mathcal}

\newcommand{\cC}{\mc C}

\newcommand{\cE}{\mc E}
\newcommand{\cF}{\mc F}

\newcommand{\cH}{\mc H}

\newcommand{\cO}{\mc O}

\newcommand{\cT}{\mc T}

\newcommand{\cV}{\mc V}


\newcommand{\C}{\mathbb{C}}

\newcommand{\R}{\mathbb{R}}
\newcommand{\Z}{\mathbb{Z}}
\newcommand{\Sph}{\mathbb{S}}



\newcommand{\End}{\operatorname{End}}

\newcommand{\Hom}{\operatorname{Hom}}

\renewcommand{\Im}{\operatorname{Im}}
\renewcommand{\Re}{\operatorname{Re}}
\newcommand{\id}{\operatorname{id}}

\newcommand{\supp}{\operatorname{supp}}

\newcommand{\tr}{\operatorname{tr}}
\newcommand{\conn}[1]{\nabla^{#1}}
\newcommand{\pbconn}[1]{\wt\nabla^{#1}}
\newcommand{\chr}[1]{{}^{#1}\Gamma}

\newcommand{\la}{\langle}
\newcommand{\ra}{\rangle}
\newcommand{\pa}{\partial}
\newcommand{\tn}{\textnormal}
\newcommand{\ff}{\tn{ff}}
\newcommand{\eps}{\epsilon}

\newcommand{\wt}{\widetilde}
\newcommand{\wh}{\widehat}
\newcommand{\ol}{\overline}

\newcommand{\bl}{{\mathrm{b}}}
\newcommand{\cl}{{\mathrm{c}}}

\newcommand{\semi}{\hbar}

\newcommand{\Diff}{\mathrm{Diff}}
\newcommand{\Diffb}{\Diff_\bl}
\newcommand{\Vf}{\mathcal V}
\newcommand{\Vb}{\Vf_\bl}

\newcommand{\Psib}{\Psi_\bl}
\newcommand{\psdo}{ps.d.o.}
\newcommand{\Psih}{\Psi_\semi}


\newcommand{\Tb}{{}^{\bl}T}

\newcommand{\half}{\frac{1}{2}}

\newcommand{\ham}{H}
\newcommand{\sub}{{\mathrm{sub}}}


\newcommand{\CI}{\cC^\infty}
\newcommand{\CIdot}{\dot\cC^\infty}
\newcommand{\CIc}{\cC^\infty_\cl}

\newcommand{\Hb}{H_{\bl}}


\newcommand{\numin}{\nu_{\mathrm{min}}}


\newcommand{\openbigpmatrix}[1]{\addtolength{\arraycolsep}{-#1}\begin{pmatrix}}
\newcommand{\closebigpmatrix}[1]{\end{pmatrix}\addtolength{\arraycolsep}{#1}}



\begin{document}
\title[Tensor-valued waves on Kerr-de Sitter]{Resonance expansions for tensor-valued waves on asymptotically Kerr-de Sitter spaces}

\author{Peter Hintz}

\address{Department of Mathematics, Stanford University, CA 94305-2125, USA}
\email{phintz@math.stanford.edu}

\address{Department of Mathematics, University of California, Berkeley, CA 94720-3840, USA}
\email{phintz@berkeley.edu}

\date{March 16, 2015. Final revision: November 29, 2015}
\subjclass[2010]{Primary: 35L05; Secondary: 58J40, 35P25, 83C57}
\keywords{Non-scalar waves; normally hyperbolic trapping; high energy estimates; analytic continuation; pseudodifferential inner products}

\begin{abstract}
  In recent joint work with Andr\'as Vasy \cite{HintzVasyKdsFormResonances}, we analyze the low energy behavior of differential form-valued waves on black hole spacetimes. In order to deduce asymptotics and decay from this, one in addition needs high energy estimates for the wave operator acting on sections of the form bundle. The present paper provides these on perturbations of Schwarzschild-de Sitter spaces in all spacetime dimensions $n\geq 4$. In fact, we prove exponential decay, up to a finite-dimensional space of resonances, of waves valued in \emph{any} finite rank subbundle of the tensor bundle, which in particular includes differential forms and symmetric tensors. As the main technical tool for working on vector bundles that do not have a natural positive definite inner product, we introduce \emph{pseudodifferential inner products}, which are inner products depending on the position in phase space.
\end{abstract}

\maketitle

\section{Introduction}
\label{SecIntro}

We continue the analysis of (linear) aspects of the black hole stability problem by studying linear \emph{tensor-valued} wave equations on perturbations of Schwarzschild-de Sitter spaces with spacetime dimension $n\geq 4$; in particular, this includes wave equations for differential forms and symmetric 2-tensors. In our main result, we establish exponential decay up to a finite-dimensional space of resonances:

\begin{thmi}
\label{ThmIntroWaveExpansion}
  Let $(M,g)$ denote a Kerr-de Sitter spacetime with small angular momentum. Let $\cE\subset\cT_k$ be a subbundle of the bundle $\cT_k$ of (covariant) rank $k$ tensors on $M$, so that the tensor wave operator $\Box_g=-\tr\nabla^2$ acts on sections of $\cE$; for instance, one can take $\cE$ to be equal to $\cT_k$, symmetric rank $k$-tensors or differential forms of degree $k$. Let $\Omega$ denote a small neighborhood of the domain of outer communications, bounded beyond but close to the cosmological and the black hole horizons by spacelike boundaries, and let $t_*$ be a smooth time coordinate on $\Omega$. See Figure~\ref{FigDomain} for the setup.

  Then for any $f\in\CIc(\Omega,\cE)$, the wave equation $\Box_g u=f$ has a unique global forward solution (supported in the causal future of $\supp f$) $u\in\CI(\Omega,\cE)$, and $u$ has an asymptotic expansion
  \[
    u = \sum_{j=1}^N\sum_{m=0}^{m_j-1}\sum_{\ell=1}^{d_j} e^{-it_*\sigma_j}t_*^m u_{jm\ell}a_{jm\ell}(x) + u',
  \]
  where $u_{jm\ell}\in\C$, the resonant states $a_{jm\ell}$, only depending on $\Box_g$, are smooth functions of the spatial coordinates and $\sigma_j\in\C$ are resonances with $\Im\sigma_j>-\delta$ (whose multiplicity is $m_j\geq 1$ and for which the space of resonant states has dimension $d_j$), while $u'\in e^{-\delta t_*}L^\infty(\Omega,\cE)$ is exponentially decaying, for $\delta>0$ small; we measure the size of sections of $\cE$ by means of a $t_*$-independent positive definite inner product.
\end{thmi}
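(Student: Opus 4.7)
The plan is to establish the expansion by meromorphic continuation of the stationary resolvent family and contour deformation of an inverse Mellin transform, in the scheme pioneered by Vasy and developed by Dyatlov, Wunsch--Zworski, and Hintz--Vasy in the scalar case. First I exploit stationarity: after extending slightly beyond the horizons so that $t_*$ is a smooth time function and the coefficients of $\Box_g$ are $t_*$-independent, the Mellin transform in $t_*$ reduces $\Box_g u = f$ to a holomorphic family of stationary problems $\widehat{\Box_g}(\sigma)\hat u(\sigma) = \hat f(\sigma)$ on the spatial slice. The objective becomes, first, meromorphic continuation of $\widehat{\Box_g}(\sigma)^{-1}$ from $\Im\sigma\ll 0$ into the strip $\Im\sigma>-\delta$, and second, polynomial bounds on this inverse as $|\Re\sigma|\to\infty$ in that strip. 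Given both, one writes $u$ as an inverse Mellin transform along a line $\Im\sigma = C_0\ll 0$, deforms the contour to $\Im\sigma = -\delta$, and reads off the expansion from the residues at the finitely many poles $\sigma_j$ caught between the two contours; the Jordan block structure of the resolvent at each $\sigma_j$ produces the polynomial factors $t_*^m$.

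Second, the Fredholm theory of $\widehat{\Box_g}(\sigma)$ is obtained microlocally: the horizons are generalized radial sets of the null bicharacteristic flow, and beyond them one imposes a complex absorbing potential or spacelike boundary, so that one has real principal type propagation away from the radial sets, the usual radial point estimates at them, and elliptic estimates off the characteristic set. This yields Fredholm estimates on weighted anisotropic Sobolev spaces provided the radial point threshold condition is satisfied. Here lies \emph{the} technical difficulty for tensor-valued waves: the radial point estimate rests on a positive commutator argument in which the subprincipal symbol of $\widehat{\Box_g}(\sigma)$ at the radial set, an $\End(\cE)$-valued quantity plus an $\Im\sigma$-term, must satisfy a positivity condition with respect to an inner product on $\cE$. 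There is no natural positive definite $\nabla$-parallel inner product on $\cE$, and the fixed $t_*$-independent one used merely to measure solutions generically fails the required positivity, so the scalar argument cannot be re-used verbatim with an eigenvalue shift. To overcome this I introduce \emph{pseudodifferential inner products}, i.e.\ inner products on $\cE$ whose Gram matrix depends on position in phase space through a pseudodifferential symbol; choosing this symbol to diagonalize, or at least triangularize, the endomorphism part of the subprincipal symbol at the radial set restores the needed positivity and upgrades the scalar radial point argument to the bundle-valued setting. This is the main conceptual novelty and, I expect, the hardest step.

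Third, for the high-energy estimates one repeats the same microlocal analysis in the semiclassical rescaling $h = |\sigma|^{-1}$; the only new ingredient is the normally hyperbolic trapping at the photon sphere, for which the semiclassical trapping estimates of Dyatlov and Wunsch--Zworski give the required polynomial loss, again after using a pseudodifferential inner product to handle the bundle subprincipal term at the trapped set. Combining the resulting high-energy bounds with the low-energy analysis of $\widehat{\Box_g}(\sigma)^{-1}$ from \cite{HintzVasyKdsFormResonances} yields the meromorphic continuation and polynomial estimate in the strip $\Im\sigma>-\delta$, which closes the contour-shifting argument and produces the expansion. Finally, uniqueness of $u$ and its support in the causal future of $\supp f$ follow directly from hyperbolicity of $\Box_g$ via standard energy estimates in $\Omega$, independently of the spectral analysis.
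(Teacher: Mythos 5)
Your overall scheme---Mellin transform, Fredholm theory for $\widehat{\Box_g}(\sigma)$ on variable-order Sobolev spaces via radial point and propagation estimates, high-energy estimates via semiclassical rescaling, and contour deformation---is the same as the paper's, and you correctly identify pseudodifferential inner products as the new tool. However, you have misplaced where the genuine bundle-valued difficulty lies. You present the radial point estimates at the horizons as \emph{the} technical obstacle, but this is not where the problem is: Vasy's radial point estimates work for operators on vector bundles essentially verbatim, because the positive commutator argument there only requires the weight/regularity $s$ to clear a threshold, and the bundle subprincipal term merely shifts the threshold $s_0$ by a bounded amount for \emph{any} choice of positive definite fiber inner product. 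One simply takes $s$ large enough; no pseudodifferential inner product is needed there. The paper explicitly sets this aside (``$s_0$ can easily be made explicit, but this is not the point of the present paper''). The real obstruction is at the \emph{trapped set}: Dyatlov's spectral gap theorem requires the \emph{quantitative} bound $|\sigma|^{-1}\sigma_{\bl,1}\bigl(\tfrac{1}{2i}(\Box-\Box^*)\bigr)<\numin/2$ at $\Gamma$, with the adjoint taken relative to a positive definite inner product, and this cannot be absorbed into a Sobolev threshold. You do mention the trapped set, but only as a secondary item (``the only new ingredient... again after using a pseudodifferential inner product''), which inverts the actual logic.

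Moreover, the claim that one can ``diagonalize, or at least triangularize'' the endomorphism part of the subprincipal symbol to ``restore the needed positivity'' passes over the crucial structural fact that makes the whole argument possible. Triangularizing alone is useless at the trapped set: if the diagonal of the zeroth-order part of $S_\sub(\Box)$ at $\Gamma$ were nonzero, no choice of fiber inner product would bring $\Im^b S_\sub(\Box)$ below the hard threshold $\numin/2$. The paper's key computation (Propositions~\ref{PropWarpedSubpr} and~\ref{PropSDSSubpr}) is that the zeroth-order part of $S_\sub(\Box_1)$ at $\Gamma$ is \emph{nilpotent}; only then can one rescale the off-diagonal entries by powers of $\eps$ to make the imaginary part $\cO(\eps)$, and Lemma~\ref{LemmaNilpotentInheritance} propagates this to $\cT_k$. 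Without identifying and verifying this nilpotency, your proposal does not close. You also omit the verification of $r$-normal hyperbolicity of the trapping in all dimensions $n\geq 4$, which is required before Dyatlov's theorem can be invoked at all. Finally, the appeal to \cite{HintzVasyKdsFormResonances} for ``low-energy analysis'' is misplaced: the meromorphic continuation in bounded strips follows from Vasy's bundle-valued Fredholm theory, and \cite{HintzVasyKdsFormResonances} is only needed to \emph{compute} the resonances, not to prove the expansion.
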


The same result holds true if we add any stationary $0$-th order term to $\Box$, and one can also add stationary first order terms which are either small or subject to a natural, but somewhat technical condition, which we explain in Remark~\ref{RmkFirstOrderPerturbation}. In fact, we can even work on spacetimes which merely \emph{approach a stationary perturbation of Schwarzschild-de Sitter space of any spacetime dimension $n\geq 4$ exponentially fast}. See \S\ref{SecProof} for the form of the Schwarzschild-de Sitter metric and the precise assumptions on regularity and asymptotics of perturbations, for details on the setup, and Theorem~\ref{ThmWaveExpansion} for the full statement of Theorem~\ref{ThmIntroWaveExpansion}.

\begin{figure}[!ht]
\centering
\includegraphics{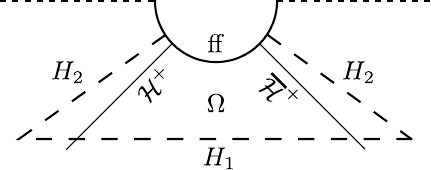}
\caption{Setup for Theorem~\ref{ThmIntroWaveExpansion} and Theorem~\ref{ThmWaveExpansion} below. Shown are the black hole horizon $\cH^+$ and the cosmological horizon $\overline{\cH}^+$, beyond which we put an artificial spacelike hypersurface $H_2$ with two connected components. The hypersurface $H_1$ plays the role of a Cauchy hypersurface, and the forcing as well as the solution to the wave equation are supported in its causal future. The domain $\Omega$ is bounded by the hypersurfaces $H_1$ and $H_2$. The `point at future infinity' in the usual Penrose diagrammatic representation is shown blown-up here, since the wave operator is well-behaved (namely, a b-operator in the sense of Melrose \cite{MelroseAPS}) on the blown-up space, and the asymptotic information is encoded on the front face $\ff$ of the blow-up.}
\label{FigDomain}
\end{figure}

The resonances and resonant states depend strongly on the precise form of the operator and which bundle one is working on. In the case of the trivial bundle, thus considering scalar waves, they were computed in the Kerr-de Sitter setting by Dyatlov \cite{DyatlovQNM}, following work by S\'a Barreto and Zworski \cite{SaBarretoZworskiResonances} as well as Bony and H\"afner \cite{BonyHaefnerDecay}. In recent work with Vasy \cite{HintzVasyKdsFormResonances}, we compute the resonances for the Hodge d'Alembertian on differential forms, which equals the tensor wave operator plus a zeroth order curvature term: We show that there is only one resonance $\sigma_1=0$ in $\Im\sigma\geq 0$, with multiplicity $m_1=1$, and we canonically identify the $0$-resonant states with cohomological information of the underlying spacetime. Note however that \cite{HintzVasyKdsFormResonances} deals with a very general class of warped product type spacetimes with asymptotically hyperbolic ends, while the present paper is only concerned with (perturbations of) Schwarzschild-de Sitter spacetimes. We remark that in general one expects that $\Box_g=-\tr\nabla^2$ on a bundle $\cE$ as in Theorem~\ref{ThmIntroWaveExpansion} has resonances in $\Im\sigma>0$, thus causing linear waves to grow exponentially in time.

We point out that if there are no resonances for $\Box_g$ (plus lower order terms) in $\Im\sigma\geq 0$, thus solutions decay exponentially, we can combine Theorem~\ref{ThmIntroWaveExpansion} with the framework for quasilinear wave-type equations developed by the author \cite{HintzQuasilinearDS} and in collaboration with Vasy \cite{HintzVasyQuasilinearKdS} and immediately obtain the \emph{global solvability of quasilinear equations}. This also works if there is merely a simple resonance at $\sigma=0$ which is annihilated by the nonlinearity.

The specific point of view from which we approach the proof of Theorem~\ref{ThmIntroWaveExpansion} was developed by Vasy~\cite{VasyMicroKerrdS} and extended by Vasy and the author \cite{HintzVasySemilinear}, building on a number of earlier works. In the context of scalar waves, more general and precise versions of Theorem~\ref{ThmIntroWaveExpansion} are known. See the references below. Thus, the main novelty is that we give a conceptually transparent framework that allows us to deal with tensor-valued waves on black hole spacetimes, where the natural inner product on the tensor bundle induced by the spacetime metric is not positive definite. The central motivation for the study of such waves is the \emph{black hole stability problem}, see the lecture notes by Dafermos and Rodnianski \cite{DafermosRodnianskiLectureNotes} for details. Notice that in order to obtain energy estimates for waves, one needs to work with positive inner products on the tensor bundle, relative to which however $\Box$ is in general not well-behaved: Most severely, it is in general far from being symmetric at the trapped set, which prevents the use of estimates at normally hyperbolic trapping. In the context of black hole spacetimes, such estimates were pioneered by Wunsch and Zworski \cite{WunschZworskiNormHypResolvent} and Dyatlov \cite{DyatlovResonanceProjectors,DyatlovSpectralGaps}. On a pragmatic level, we show that one can conjugate $\Box$ by a suitable $0$-th order pseudodifferential operator so as to make the conjugated operator (almost) symmetric at the trapped set with respect to a positive definite inner product, and one can then directly apply Dyatlov's results \cite{DyatlovSpectralGaps}. In other words, we reduce the high frequency analysis of tensor-valued waves to an essentially scalar problem. The conceptually correct point of view to accomplish this conjugation is that of \emph{pseudodifferential inner products}, which we introduce in this paper.

Roughly speaking, pseudodifferential inner products replace ordinary inner products $\int\la B_0(u),v\ra\,|dg|$, where $B_0$ is an inner product on the fibers of $\cE$, mapping $\cE$ into its anti-dual $\ol{\cE}^*$, by `inner products' of the form $\int \la B(x,D)u,v\ra\,|dg|$, where $B\in\Psi^0$ is a zeroth order pseudodifferential operator mapping sections of $\cE$ into sections of $\ol{\cE}^*$. Thus, we gain a significant amount of flexibility, since we can allow the inner product to \emph{depend on the position in phase space}, rather than merely on the position in the base: Indeed, the principal symbol $b=\sigma_0(B)$ is an inner product on the vector bundle $\pi^*\cE$ over $T^*M\setminus 0$, where $\pi\colon T^*M\setminus 0\to M$ is the projection.

One can define adjoints of operators $P\in\Psi^m(M,\cE)$ (e.g.\ $P=\Box_g$), acting on sections of $\cE$, relative to a pseudodifferential inner product $B$, denoted $P^{*B}$, which are well-defined modulo smoothing operators. Moreover, there is an invariant symbolic calculus involving the \emph{subprincipal operator} $S_\sub(P)$, which is a first order differential operator on $T^*M\setminus 0$ acting on sections of $\pi^*\cE$ that invariantly encodes the subprincipal part of $P$, for computing principal symbols of commutators and imaginary parts of such operators. In the case that $P$ is principally scalar and real, the principal symbol of $P-P^{*B}\in\Psi^{m-1}(M,\cE)$ then vanishes in some conic subset of phase space $T^*M\setminus 0$ if and only if $S_\sub(P)-S_\sub(P)^{*b}$ (taking the adjoint with respect to the inner product $b$) does, which in turn can be reinterpreted as saying that the principal symbol of $QPQ^{-1}-(QPQ^{-1})^{*B_0}$ vanishes there, where $B_0$ is an ordinary inner product on $\cE$, and $Q\in\Psi^0(M,\cE)$ is a suitably chosen elliptic operator. In the case considered in Theorem~\ref{ThmIntroWaveExpansion} then, it turns out that the subprincipal operator of $\Box_g$ on tensors, decomposed into parts acting on tangential and normal tensors according to the product decompositions $M=\R_t\times X_x$ and $X=(r_-,r_+) \times \Sph^{n-2}$, at the trapped set equals the derivative along the Hamilton vector field $\ham_G$, $G$ the dual metric function, plus a \emph{nilpotent} zeroth order term. This then enables one to choose a positive definite inner product $b$ on $\pi^*\cE$ relative to which $S_\sub(\Box_g)$ is arbitrarily close to being symmetric at the trapped set; thus with $B=b(x,D)$, the operator $\Box_g$ is arbitrarily close to being symmetric with respect to the pseudodifferential inner product $B$. Hence, one can indeed appeal to Dyatlov's results on spectral gaps by considering a conjugate of $\Box_g$, which is the central ingredient in the proof of Theorem~\ref{ThmIntroWaveExpansion}.

We point out that refined microlocal propagation results, in the sense of polarization sets, for systems of real principal type were proved by Dencker \cite{DenckerPolarization}, and in fact the subprincipal operator we define here is very closely related to the partial connection along the Hamilton flow defined in \cite{DenckerPolarization}; see Remark~\ref{RmkDencker} for details. In fact, for principally scalar operators, which are the focus of the present paper, the partial connection of \cite{DenckerPolarization} is canonically defined --- not merely up to a rescaling --- and agrees (up to a factor of $i$) with the subprincipal operator; from this perspective, the present paper shows that Dencker's partial connection turns out to play a key role also for a certain kind of quantitative analysis of (subprincipally) non-scalar operators.

\subsection{Related work}

The study of non-scalar waves on black hole backgrounds has focused primarily on Maxwell's equations: Sterbenz and Tataru \cite{SterbenzTataruMaxwellSchwarzschild} showed local energy decay for Maxwell's equations on a class of spherically symmetric asymptotically flat spacetimes including Schwarzschild. Blue \cite{BlueMaxwellSchwarzschild} established conformal energy and pointwise decay estimates in the exterior of the Schwarzschild black hole; Andersson and Blue \cite{AnderssonBlueMaxwellKerr} proved similar estimates on slowly rotating Kerr spacetimes. These followed earlier results for Schwarzschild by Inglese and Nicolo \cite{IngleseNicoloMaxwellSchwarzschild} on energy and pointwise bounds for integer spin fields in the far exterior of the Schwarzschild black hole, and by Bachelot \cite{BachelotSchwarzschildScattering}, who proved scattering for electromagnetic perturbations. Finster, Kamran, Smoller and Yau \cite{FinsterKamranSmollerYauDiracKerr} proved local pointwise decay for Dirac waves on Kerr. There are further works which in particular establish bounds for certain components of the Maxwell field, see Donninger, Schlag and Soffer \cite{DonningerSchlagSofferSchwarzschild} and Whiting \cite{WhitingKerrModeStability}. Dafermos \cite{DafermosEinsteinMaxwellScalarStability,DafermosBlackHoleNoSingularities} studied the non-linear Einstein-Maxwell-scalar field system under the assumption of spherical symmetry.

The framework in which we describe resonances was introduced by Vasy \cite{VasyMicroKerrdS}. In the scalar setting, this can directly be combined with estimates at normally hyperbolic trapping by Dyatlov \cite{DyatlovResonanceProjectors,DyatlovSpectralGaps} and Nonnenmacher and Zworski \cite{NonnenmacherZworskiDecay}, building on \cite{WunschZworskiNormHypResolvent}, to obtain resonance expansions for scalar waves. On exact Kerr-de Sitter space, Dyatlov proved a significant strengthening of this in \cite{DyatlovAsymptoticDistribution}, obtaining a full resonance expansion for scalar waves, improving on the result of Bony and H\"afner \cite{BonyHaefnerDecay} and Melrose, S\'a Barreto and Vasy \cite{MelroseSaBarretoVasySdS} in the Schwarzschild-de Sitter setting, which in turn followed S\'a Barreto and Zworski \cite{SaBarretoZworskiResonances}. Vasy \cite{VasyHyperbolicFormResolvent} proved the meromorphic continuation of the resolvent of the Laplacian on differential forms on asymptotically hyperbolic spaces (following earlier works by Mazzeo and Melrose \cite{MazzeoMelroseHyp} and Guillarmou \cite{GuillarmouMeromorphic} in the scalar setting and Mazzeo \cite{MazzeoHodgeCohomology}, Carron and Pedon \cite{CarronPedonForms} and Guillarmou, Moroianu and Park \cite{GuillarmouMoroianuParkDirac} for forms and spinors; see also the work of Dyatlov, Faure and Guillarmou \cite{DyatlovFaureGuillarmouPowerSpectrum}, which in particular involves a discussion of Laplacians on compact hyperbolic manifolds acting on symmetric tensors). The fact that the analysis presented in \cite{VasyMicroKerrdS}, which underlies \cite{VasyHyperbolicFormResolvent}, works on sections of vector bundles just as it does on functions is fundamental for the present paper.

There is a large literature on linear \emph{scalar} waves on black hole spacetimes, see the works by Dafermos, Rodnianski \cite{DafermosRodnianskiSdS,DafermosRodnianskiKerrBoundedness,DafermosRodnianskiKerrDecaySmall} and Dafermos, Rodnianski and Shlapentokh-Rothman \cite{DafermosRodnianskiShlapentokhRothmanDecay}, following work by Wald \cite{WaldSchwarzschild} and Kay and Wald \cite{KayWaldSchwarzschild}, further Tataru \cite{TataruDecayAsympFlat} as well as Tataru and Tohaneanu \cite{TataruTohaneanuKerrLocalEnergy}; further references are given in the introduction of \cite{VasyMicroKerrdS}.

\subsection{Structure of the paper}

In \S\ref{SecProof}, we recall the Schwarzschild-de Sitter metric and its extension past the horizons, put it into the framework of \cite{HintzVasySemilinear,VasyMicroKerrdS} for the study of asymptotics of waves, and establish the normally hyperbolic nature of its trapping. We proceed to sketch the proof of Theorem~\ref{ThmIntroWaveExpansion}, leaving the discussion of high energy estimates at the trapped set to the subsequent sections, which comprise the central part of the paper: We introduce pseudodifferential inner products on vector bundles in full generality in \S\ref{SecPsdoInner}, and we use the theory developed there in \S\ref{SecSubprincipalLaplace} to study pseudodifferential inner products for wave operators on tensor bundles, uncovering the nilpotent nature of the subprincipal operator of $\Box$ on Schwarzschild-de Sitter space at the trapping in \S\ref{SubsecSDS} and thereby finishing the proof of Theorem~\ref{ThmIntroWaveExpansion}.

\subsection*{Acknowledgments}

I am very grateful to Andr\'as Vasy and Alexandr Zamorzaev for many useful discussions, and to Semyon Dyatlov for suggesting the definition of the subprincipal operator in a closely related context. I am also grateful to an anonymous referee for many helpful comments and suggestions.

I gratefully acknowledge support by a Gerhard Casper Stanford Graduate Fellowship and Andr\'as Vasy's National Science Foundation grants DMS-1068742 and DMS-1361432.

\section{Detailed setup and proof of the main theorem}
\label{SecProof}

We recall the form of the $n$-dimensional Schwarzschild-de Sitter metric, $n\geq 4$: We equip $M=\R_t\times X$, $X=(r_-,r_+)_r\times\Sph^{n-2}_\omega$, with $r_\pm$ defined below, with the metric
\begin{equation}
\label{EqSDSMetric}
  g_0 = \mu\,dt^2-(\mu^{-1}\,dr^2+r^2\,d\omega^2),
\end{equation}
where $d\omega^2$ is the round metric on the sphere $\Sph^{n-2}$, and $\mu=1-\frac{2M_\bullet}{r^{n-3}}-\lambda r^2$, $\lambda=\frac{2\Lambda}{(n-2)(n-1)}$, with $M_\bullet>0$ the black hole mass and $\Lambda>0$ the cosmological constant. The assumption
\begin{equation}
\label{EqSDSNondegeneracy}
  M_\bullet^2\lambda^{n-3}<\frac{(n-3)^{n-3}}{(n-1)^{n-1}}
\end{equation}
guarantees that $\mu$ has two unique positive roots $0<r_-<r_+$. Indeed, let $\wt\mu=r^{-2}\mu=r^{-2}-2M_\bullet r^{1-n}-\lambda$. Then $\wt\mu' = -2r^{-n}(r^{n-3}-(n-1)M_\bullet)$ has a unique positive root $r_p=[(n-1)M_\bullet]^{1/(n-3)}$, $\wt\mu'(r)>0$ for $r\in(0,r_p)$ and $\wt\mu'(r)<0$ for $r>r_p$; moreover, $\wt\mu(r)<0$ for $r>0$ small and $\wt\mu(r)\to-\lambda<0$ as $r\to\infty$, thus the existence of the roots $0<r_-<r_+$ of $\wt\mu$ is equivalent to the requirement $\wt\mu(r_p)=\frac{n-3}{n-1}r_p^{-2}-\lambda>0$, which is equivalent to \eqref{EqSDSNondegeneracy}.

Define $\alpha=\mu^{1/2}$, thus $d\alpha=\frac{1}{2}\mu'\alpha^{-1}\,dr$, and let
\begin{equation}
\label{EqBetaPM}
  \beta_\pm(r):=\mp\frac{2}{\mu'(r)}
\end{equation}
near $r_\pm$, so $\beta_\pm(r_\pm)>0$ there. Then the metric $g_0$ can be written as
\[
  g_0 = \alpha^2\,dt^2-h,\quad h=\alpha^{-2}\,dr^2+r^2\,d\omega^2=\beta_\pm^2\,d\alpha^2+r^2\,d\omega^2,
\]
We introduce a new time variable $t_*=t-F(\alpha)$, with $\pa_\alpha F=-\alpha^{-1}\beta_\pm$ near $r=r_\pm$. Then
\[
  g_0 = \mu\,dt_*^2 - \beta_\pm\,dt_*\,d\mu - r^2\,d\omega^2
\]
near $r=r_\pm$, which extends as a non-degenerate Lorentzian metric to a neighborhood $\wt M=\R_{t_*}\times\wt X$ of $M$, where $\wt X=(r_--2\delta,r_++2\delta)\times\Sph^{n-2}$. We will consider the Cauchy problem for the tensor wave equation in the domain $\Omega\subset\wt M$,
\[
  \Omega = [0,\infty)_{t_*} \times [r_--\delta,r_++\delta]_r \times \Sph^{n-2}.
\]
Thus, $\Omega$ is bounded by the Cauchy surface $H_1=\{t_*=0\}$, which is spacelike, and by the hypersurface $H_2=\bigcup_\pm \{r=r_\pm \pm \delta\}$, which has two spacelike components, one lying beyond the black hole ($r_-$) and the other beyond the cosmological ($r_+$) horizon; see Figure~\ref{FigDomain}.

For the purpose of analysis on spacetimes close to (but not necessarily asymptotically equal to!) Schwarzschild-de Sitter space, we encode the uniform (asymptotically stationary) structure of the spacetime by working on a compactified model, which puts the problem into the setting of Melrose's \emph{b-analysis}, see \cite{MelroseAPS}: Define $\tau:=e^{-t_*}$, and partially compactify $\wt M$ to a manifold $\ol M$ with boundary by adding $\tau=0$ as the boundary at future infinity and declaring $\tau$ to be a smooth boundary defining function. The metric $g_0$ becomes a smooth \emph{Lorentzian b-metric} on $\ol M$: If $dx_i$ denotes coordinate differentials on $\wt X$, then $g_0$ is a linear combination of $\frac{d\tau^2}{\tau^2}$, $\frac{d\tau}{\tau}\otimes dx_i+dx_i\otimes\frac{d\tau}{\tau}$ and $dx_i\otimes dx_j$ with coefficients which are smooth on $\ol M$, and $g_0$, written in such coordinates, is a non-degenerate matrix (with Lorentzian signature) \emph{up to and including} $\tau=0$. Invariantly, we have the Lie algebra $\Vb(\ol M)$ of \emph{b-vector fields}, which are the vector fields tangent to the boundary, spanned by $\tau\pa_\tau=-\pa_{t_*}$ and $\pa_{x_i}$; elements of $\Vb(\ol M)$ are sections of a natural vector bundle $\Tb\ol M$, the b-tangent bundle, and we have the dual bundle $\Tb^*\ol M$, spanned by $\frac{d\tau}{\tau}$ and $dx_i$. Thus, $g$ is a smooth non-degenerate section of the symmetric second tensor power $S^2\Tb^*\ol M$.

Now, given a complex vector bundle $\cE\to\ol M$ of finite rank, equip it with an arbitrary Hermitian inner product and any smooth b-connection, which gives a notion of differentiating sections of $\cE$ along b-vector fields; over $\Omega$ (which has compact closure in $\ol M$), all choices of inner products are equivalent. We can then define the b-Sobolev space $\Hb^s(\Omega,\cE)$ for $s\in\Z_{\geq 0}$ to consist of all sections of $\cE$ over $\Omega$ which are square integrable (with respect to the volume density $|dg|$ induced by the metric $g$) together with all of its b-derivatives up to order $s$, and extend this to all $s\in\R$ by duality and interpolation, or via the use of b-pseudodifferential operators. For the forward problem for the wave equation, we work on spaces of functions which vanish in the past of $H_1$ and which extend across $H_2$. Thus, we work with the space $\Hb^s(\Omega,\cE)^{\bullet,-}$ of distributions $u\in\Hb^s(\Omega,\cE)$ which are extendible distributions at $H_2$ and supported distributions at $H_1$, i.e.\ they are restrictions to $\Omega$ of distributions on $\ol M$ which are supported in $t_*\geq 0$. See H\"ormander \cite[Appendix~B]{HormanderAnalysisPDE3} for details. We also have weighted b-Sobolev spaces $\Hb^{s,r}(\Omega,\cE)=\tau^r\Hb^s(\Omega,\cE)$, likewise for spaces of supported/extendible distributions. Note that the b-Sobolev spaces $\Hb^s$ are independent of the choice of boundary defining function $\tau$ in that the choice $\tau'=a\tau^\gamma$, $a=a(x)$ smooth, $\gamma>0$, while it changes the smooth structure of $\ol M$, yields the same spaces $\Hb^s$ with equivalent norms. The asymptotic behavior of waves will be encoded on the boundary $\pa_\infty\Omega$ at future infinity of $\Omega$, that is, on
\[
  \pa_\infty\Omega=\{\tau=0\}\times[r_--\delta,r_++\delta]_r\times\Sph^{n-2},
\]
which is a smooth manifold with boundary. Similarly to the above definitions, we can define Sobolev spaces (including semiclassical versions of these) with supported/extendible character at the boundary.

Suppose $g$ is a Lorentzian b-metric such that for some smooth Lorentzian b-metric $g'$, we have $g-g'\in\Hb^{\infty,r}(\Omega,S^2\Tb^*\ol M)$ for some $r>0$. (By the discussion of b-Sobolev spaces above, this condition on $g$ is invariant, i.e.\ independent of the specific choice of the boundary defining function $e^{-t_*}$ of the spacetime at future infinity.) Changing $g'$ so as to make it invariant under time translations does not affect this condition, since the difference between $g'$ and the metric obtained from $g'$ by replacing the metric coefficients (which are smooth on $\ol M$) by their values at $\tau=0$ lies in $\tau\CI(\Omega,S^2\Tb^*\ol M)\subset\Hb^{\infty,1}(\Omega,S^2\Tb^*\ol M)$; thus, let us assume $g'$ is $t_*$-invariant, i.e.\ its coefficients are independent of $t_*$ (equivalently, $\tau$). We will consider the wave operator $\Box_g$ acting on sections of the bundle $\cT_k$ of covariant tensors of rank $k$ over $\Omega$. We assume that $g'$ and $g_0$ are close (in the $C^k$ sense for sufficiently large $k$), so that the dynamical and geometric structure of $g$ is close to that of $g_0$ (see \cite[\S{3}]{HintzVasySemilinear} and \cite[\S{5}]{HintzVasyQuasilinearKdS} for details); in other words, the metric $g$ is exponentially approaching a stationary metric close to the Schwarzschild-de Sitter metric, so for instance perturbations (within this setting) of Kerr-de Sitter spaces are allowed. Most importantly, we require that the nature of the trapping for $g'$ (and thus for $g$) still be normally hyperbolic, and the subprincipal operator (see \S\ref{SubsecInvariantSubprincipal}) of $\Box_g$ at the trapped set, while not necessarily having the nilpotent structure alluded to in the introduction and explained in \S\ref{SubsecSDS}, have small imaginary part relative to (the symbol of) a pseudodifferential inner product on $\cT_k$. We point out that we will show the $r$-normal hyperbolicity for every $r$ of the trapping for Schwarzschild-de Sitter space in all spacetime dimensions below, and $r$-normal hyperbolicity (for large, but finite $r$) is structurally stable under perturbations of the metric, see Dyatlov~\cite{DyatlovResonanceProjectors} and Hirsch, Shub and Pugh \cite{HirschShubPughInvariantManifolds}. We then have:

\begin{thm}
\label{ThmWaveExpansion}
  In the above notation, if $g'$ is sufficiently close to the Schwarzschild-de Sitter metric $g_0$, then there exist $s_0\in\R$ and $\delta>0$ as well as a finite set $\{\sigma_j\colon j=1,\ldots,N\}\subset\C$, $\Im\sigma_j>-\delta$, integers $m_j\geq 1$ and $d_j\geq 1$, and smooth functions $a_{jm\ell}\in\CI(\pa_\infty\Omega)$, $1\leq j\leq N$, $0\leq m\leq m_j-1$, $1\leq\ell\leq d_j$, such that the following holds: The equation
  \begin{equation}
  \label{EqTensorWave}
    \Box_g u=f, \quad f \in \Hb^{s,\delta}(\Omega,\cT_k)^{\bullet,-}, \quad s\geq s_0,
  \end{equation}
  has a unique solution $u\in\Hb^{-\infty,-\infty}(\Omega,\cT_k)^{\bullet,-}$, which has an asymptotic expansion
  \[
    u = \chi(\tau)\sum_{j=1}^N \sum_{m=0}^{m_j-1} \sum_{\ell=1}^{d_j} \tau^{i\sigma_j}|\log\tau|^m u_{jm\ell}a_{jm\ell} + u',
  \]
  where $\chi$ is a cutoff function, i.e.\ $\chi(\tau)\equiv 1$ near $\tau=0$ and $\chi(\tau)\equiv 0$ near the Cauchy surface $H_1$, and $u_{jm\ell}\in\C$, while the remainder term is $u'\in\Hb^{s,\delta}(\Omega,\cT_k)^{\bullet,-}$.

  The same result holds true if we restrict to a subbundle of $\cT_k$ which is preserved by the action of $\Box$, for instance the degree $k$ form bundle, or the symmetric rank $k$ tensor bundle.

  If $V\in\CI(\ol M,\End(\cT_k))+\Hb^{\infty,r}(\Omega,\End(\cT_k))$, $r>0$, is a smooth (conormal) $\End(\cT_k)$-valued potential (without restriction on its size), the analogous result holds for $\Box_g$ replaced by $\Box_g+V$. We may even change $\Box_g$ by adding a first order b-differential operator $L$ acting on $\cT_k$ with coefficients which are elements of $\CI+\Hb^{\infty,r}$, provided either the coefficients of $L$ are small, or the subprincipal operator of $\Box_g+L$ is sufficiently close to being symmetric with respect to a pseudodifferential inner product on $\cT_k$, see Remark~\ref{RmkFirstOrderPerturbation}.
\end{thm}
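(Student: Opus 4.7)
The plan is to set things into the Fredholm framework of Vasy \cite{VasyMicroKerrdS} and Hintz--Vasy \cite{HintzVasySemilinear}, reducing the forward problem for $\Box_g$ to a meromorphic spectral problem on the compact boundary manifold $\pa_\infty\Omega$, and then to shift contours to read off the asymptotic expansion. I first treat the stationary metric $g'$ and absorb the exponentially decaying perturbation $g-g'\in\Hb^{\infty,r}$ at the end. Mellin transforming in $\tau=e^{-t_*}$ produces the spectral family $\wh{\Box}_{g'}(\sigma)\in\Diff^2(\pa_\infty\Omega,\cT_k)$, and I need to establish its invertibility modulo a discrete set of poles in a half-plane $\{\Im\sigma>-\delta\}$, together with polynomial estimates on large real parts of $\sigma$. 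Three microlocal inputs are required: radial point estimates at the b-conormal bundles of the two horizons (which are saddle-type radial points), real principal type propagation along the null-bicharacteristic flow in between, and a high-energy estimate at the (photon sphere) trapped set. The first two work on sections of $\cT_k$ exactly as in the scalar case, since the threshold quantities depend only on the principal scalar symbol and the pairing of the Sobolev weight against the subprincipal imaginary part, which can be arranged by choosing $s_0$ and $\delta$ appropriately.

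The central obstacle is the trapped set estimate. The trapping for Schwarzschild--de Sitter in dimension $n\geq 4$ is $r$-normally hyperbolic for every $r$, which I would check by linearizing the Hamilton flow of $G$ at $\Gamma$; by structural stability (see \cite{DyatlovResonanceProjectors,HirschShubPughInvariantManifolds}) this persists for $g'$ close to $g_0$ and then for $g$. However, Dyatlov's spectral gap result \cite{DyatlovSpectralGap} applies to scalar operators, whereas $\Box_g$ on $\cT_k$ is a system whose subprincipal operator does not vanish along $\ham_G$. Here the machinery developed in Section~\ref{SecPsdoInner} and applied in Section~\ref{SubsecSDS} enters: I will show there that $S_\sub(\Box_g)$ at $\Gamma$ equals $\ham_G$ plus a nilpotent zeroth order term on the fibers of $\pi^*\cT_k$. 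This nilpotency allows me, for any $\varepsilon>0$, to construct a positive definite pseudodifferential inner product $B=b(x,D)$ on $\cT_k$ with respect to which $\tfrac{1}{2i}(\Box_g-\Box_g^{*B})$ has principal symbol of norm $<\varepsilon$ on a conic neighborhood of $\Gamma$. Equivalently, there exists an elliptic $Q\in\Psi^0(\ol M,\cT_k)$ such that $P:=Q\Box_g Q^{-1}$ is symmetric, modulo an $\varepsilon$-small error, at $\Gamma$ relative to an ordinary positive inner product. For such $P$, the hypotheses of \cite{DyatlovSpectralGap} are met, yielding a semiclassical estimate for $\wh{P}(\sigma)$ in a strip $\Im\sigma>-\delta_0$, with $\delta_0>0$ depending on the minimal expansion rate at $\Gamma$ minus $\varepsilon$; ellipticity of $Q$ transfers this to the desired high-energy estimate for $\wh{\Box}_g(\sigma)$.

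Combining the three microlocal estimates produces the Fredholm property of $\wh{\Box}_{g'}(\sigma)$ on suitable pairs of b-Sobolev spaces with variable order encoding the thresholds, together with meromorphy and polynomial high-energy bounds in $\{\Im\sigma>-\delta\}$ for $\delta<\delta_0$. To convert this into the expansion for $u$, I invert the Mellin transform: moving the contour from a high line $\Im\sigma=C_0$, on which the forward solution is represented by the Cauchy datum of $f$, down to $\Im\sigma=-\delta$, I pick up residues at the finitely many poles $\sigma_j\in\{\Im\sigma>-\delta\}$, producing exactly the terms $\tau^{i\sigma_j}|\log\tau|^m u_{jm\ell} a_{jm\ell}$ (with $m_j$ equal to the order of the pole and $d_j$ equal to the dimension of the kernel of the principal Laurent coefficient), while the shifted contour integral contributes the remainder $u'\in\Hb^{s,\delta}(\Omega,\cT_k)^{\bullet,-}$. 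This step is precisely the argument of \cite[\S2]{HintzVasySemilinear}, which I only need to quote once the spectral family has the stated mapping properties.

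Finally, the perturbation from $g'$ to $g$, the subbundle statement, the potential $V$, and the first order perturbation $L$ are handled by essentially formal modifications. The difference $\Box_g-\Box_{g'}$ has coefficients in $\Hb^{\infty,r}\cdot\Diffb^2$ and hence maps $\Hb^{s,\delta}\to\Hb^{s-2,\delta+r}$ with $r>\delta$, so it is a relatively compact perturbation and does not disturb the Fredholm framework or the location of resonances with $\Im\sigma>-\delta$. For the subbundle statement, what is used is only that the projection onto the subbundle is parallel (so that $\Box_g$ preserves sections), after which the same estimates apply. A stationary $0$-th order potential $V$ changes neither the principal nor the subprincipal symbol, so only the set of resonances (but not the framework) is affected; a first order stationary perturbation $L$ alters the subprincipal operator at $\Gamma$ by a bundle endomorphism, which is either small (hence absorbed into $\varepsilon$) or, by assumption, already compatible with the pseudodifferential inner product $B$ constructed above, and in both cases the conjugation argument still produces a nearly symmetric operator at $\Gamma$ to which Dyatlov's estimate applies. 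The main obstacle throughout is the trapped set estimate, and its resolution through the nilpotent structure of $S_\sub(\Box_g)$ at $\Gamma$ is the conceptual heart of the proof.
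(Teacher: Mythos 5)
Your proposal is correct and follows essentially the same route as the paper: Mellin-transforming to the spectral family, invoking the Fredholm/radial-point/propagation framework of Vasy and Hintz--Vasy, establishing $r$-normal hyperbolicity and reducing the trapped-set high-energy estimate to Dyatlov's spectral gap via a pseudodifferential inner product (equivalently, a conjugation $Q\Box_g Q^-$ making the operator nearly symmetric at $\Gamma$), and then contour-shifting the inverse Mellin transform to extract the resonance expansion. The only small imprecision is the assertion that Dyatlov's theorem is ``scalar only'': as the paper notes, it applies to systems on vector bundles as soon as the imaginary-part bound~\eqref{EqBoundAtTrapping} is verified, which is precisely what your $\Psi$-inner-product construction secures, so the substance of the argument is unaffected.
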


The numbers $\sigma_j$ are called \emph{resonances} or \emph{quasinormal modes}, and the functions $a_{jm\ell}$ \emph{resonant states}. They have been computed in various special cases; see the discussion in the introduction for references. The threshold regularity $s_0$ is related to the dynamics of the flow of the Hamiltonian vector field $\ham_G$ of the dual metric function $G$ (i.e.\ $G(x,\xi)=|\xi|_{G(x)}^2$, with $G$ the dual metric of $g$) near the horizons which are generalized \emph{radial sets}, see \cite[Proposition~2.1]{HintzVasySemilinear}. Thus, $s_0$ can easily be made explicit, but this is not the point of the present paper.

The proof of Theorem~\ref{ThmWaveExpansion} proceeds in the same way as the proof of \cite[Theorem~2.20]{HintzVasySemilinear} in the scalar setting, so we shall be brief: Denote by $N(\Box_g)$ the normal operator of $\Box_g$: We freeze the coefficients of $\Box_g\in\Diffb^2(\Omega,\cT_k)$ at $\pa_\infty\Omega$ and thus obtain a dilation-invariant operator $N(\Box_g)$, with $\Box_g-N(\Box_g)$ being an operator whose coefficients decay exponentially (in $t_*$) by assumption on the structure of $g$. Denote by $\wh{\Box_g}(\sigma)\in\Diff^2(\pa_\infty\Omega,\cT_k)$ the Mellin transformed normal operator family, depending holomorphically on $\sigma\in\C$, which we obtain from $N(\Box_g)$ by replacing $D_{t_*}$ by $-\sigma$. (Note that changing the boundary defining function $\tau$ to $a(x)\tau^\gamma$, we can express the normal operator with respect to the new defining function in terms of the normal operator with respect to $\tau$, namely it equals $a(x)^{-1}\wh{\Box_g}(\gamma\sigma)a(x)$.) Once we show \emph{high energy estimates} for $\wh{\Box_g}(\sigma)^{-1}$, which are polynomial bounds on its operator norm between suitable Sobolev spaces as $|\Re\sigma|\to\infty$ in $\Im\sigma>-\delta$, we can use a contour shifting argument to iteratively improve on the decay of $u$, picking up contributions of the poles of $\wh{\Box_g}(\sigma)^{-1}$ which give rise to the resonance expansion. Concretely, these spaces are semiclassical Sobolev spaces with extendible character at the boundary of $\pa_\infty\Omega$, see in particular \cite{VasyMicroKerrdS} and the proof of \cite[Theorem~2.20]{HintzVasySemilinear}. Furthermore, as shown by Vasy \cite[\S{7}]{VasyMicroKerrdS}, these high energy estimates in $\Im\sigma\gg 0$ are automatic if the boundary defining function of future infinity is timelike; our choice does not satisfy this, but changing $t_*$ by a smooth function of the spatial variables, this can easily be arranged, see \cite[\S{6}]{VasyMicroKerrdS}, and in fact we can arrange $t_*=t$ away from the black hole and cosmological horizons. The fact that the remainder term $u'$ has the same regularity as the forcing term $f$, thus $u'$ loses $2$ derivatives relative to the elliptic gain of $2$ derivatives, comes from the high energy estimate losing a power of $2$, see \cite[Theorem~5.5]{HintzVasyQuasilinearKdS}, which in turn is caused by the same loss for high energy estimates at normally hyperbolic trapping, see \cite[Theorem~1]{DyatlovSpectralGaps}, or \cite[Theorem~4.5]{HintzVasyQuasilinearKdS} for a microlocalized version of Dyatlov's estimate.

Thus, the crucial point is to obtain high energy estimates at the trapped set for the operator $\Box_g$ acting on $\cT_k$ in $\Im\sigma>-\delta$. Dyatlov's result \cite[Theorem~1]{DyatlovSpectralGaps} (see also the discussion preceding \cite[Theorem~5.5]{HintzVasyQuasilinearKdS}) shows that a sufficient condition for these to hold is
\begin{equation}
\label{EqBoundAtTrapping}
  |\sigma|^{-1}\sigma_{\bl,1}\Bigl(\frac{1}{2i}(\Box_g-\Box_g^*)\Bigr)<\numin/2
\end{equation}
at the trapped set $\Gamma$, where $\numin$ is the minimal normal expansion rate of the Hamilton flow at the trapping, see \cite{DyatlovSpectralGaps} and the computation below. (We work in the b-setting here, which via the Mellin transform is equivalent, at least on the normal operator level, which is all that matters, to the semiclassical setting considered in Dyatlov's work; see the discussion in \cite[\S{5}]{HintzVasyQuasilinearKdS}.) Here, the adjoint is taken with respect to a \emph{positive definite inner product} on $\cT_k$; note that the inner product induced by $g$, with respect to which $\Box_g$ is of course symmetric, is not positive definite, except when $k=0$, i.e.\ for the scalar wave equation. Since $g$ is close to the Schwarzschild-de Sitter metric, it suffices to obtain such a bound for the Schwarzschild-de Sitter metric $g_0$. While this bound is impossible to obtain directly for the full range of Schwarzschild-de Sitter spacetimes, we show in \S\ref{SubsecSDS} how it can be obtained if we use pseudodifferential products, see Definition~\ref{DefPsdoInnerProduct}. We refer to Proposition~\ref{PropImagSymbolInv} for the efficient calculation of $\Box_g-\Box_g^*$ in an abstract setting, with the adjoint taken relative to a pseudodifferential inner product --- see Definition~\ref{DefPsdoAdjoint} --- and using the subprincipal operator defined in Definition~\ref{DefInvSubpr}; for $\Box_g$ concretely, the subprincipal operator is computed in Propositions~\ref{PropLaplaceSubpr} and \ref{PropSDSSubpr}. Prosaically, as we show in Proposition~\ref{PropPsdoInnerAsConjugation}, the use of a pseudodifferential inner product is equivalent to considering a conjugated operator $P:=Q\Box_g Q^-$, where $Q\in\Psib^0(\ol M,\cT_k)$ is a carefully chosen elliptic operator with parametrix $Q^-$: For any $\eps>0$, we can arrange $|\sigma|^{-1}\sigma_{\bl,1}(\frac{1}{2i}(P-P^*))<\eps$ (with the adjoint taken relative to an ordinary positive definite inner product on $\cT_k$), thus \eqref{EqBoundAtTrapping} holds for $\Box_g$ replaced by $P$; we will prove this in Theorem~\ref{ThmSDSNilpotentAtTrapping}. Hence \cite[Theorem~1]{DyatlovSpectralGaps} applies to $P$, establishing a spectral gap; indeed, by the remark following \cite[Theorem~1]{DyatlovSpectralGaps}, Dyatlov's result applies for operators on bundles as well, \emph{as soon as one establishes \eqref{EqBoundAtTrapping}}. Arranging \eqref{EqBoundAtTrapping} in a natural fashion lies at the heart of \S\S\ref{SecPsdoInner} and \ref{SecSubprincipalLaplace}.

It remains to establish the $r$-normal hyperbolicity for all $r$ for the Schwarzschild-de Sitter metric. The dynamics at the trapping only depend on properties of the (scalar!) principal symbol of $\Box_{g_0}$. For easier comparison with \cite{DyatlovWaveAsymptotics,VasyMicroKerrdS,WunschZworskiNormHypResolvent}, we consider the operator
\begin{equation}
\label{EqProofOp}
  P=-r^2\Box_{g_0} = -r^2\mu^{-1} D_t^2 + r^{-n+4}D_r r^{n-2}\mu D_r + \Delta_{\Sph^{n-2}}
\end{equation}
instead. We take the Fourier transform in $-t$ and rescale to a semiclassical operator on $X$ (this amounts to multiplying $\wh P$ by $h^2$, giving a second order semiclassical differential operator $P_h$, with $h=|\sigma|^{-1}$, and we then define $z=h\sigma$). Introducing coordinates on $T^*X$ by writing $1$-forms as $\xi\,dr+\eta\,d\omega$, and letting
\[
  \Delta_r = r^2\mu = r^2(1-\lambda r^2)-2M_\bullet r^{5-n},
\]
$P_h$ has semiclassical principal symbol
\[
  p = \Delta_r\xi^2 - \frac{r^4}{\Delta_r}z^2 + |\eta|^2,
\]
and correspondingly the Hamilton vector field is
\[
  \ham_p = 2\Delta_r\xi\pa_r - \Bigl(\pa_r\Delta_r\xi^2-\pa_r\Bigl(\frac{r^4}{\Delta_r}\Bigr)z^2\Bigr)\pa_\xi + \ham_{|\eta|^2}
\]
We work with real $z$, hence $z=\pm 1$. First, we locate the trapped set: If $\ham_p r=2\Delta_r\xi=0$, then $\xi=0$, in which case $\ham_p^2 r=2\Delta_r \ham_p\xi=2\Delta_r\pa_r(r^4/\Delta_r)z^2$. Recall the definition of the function $\wt\mu=\mu/r^2=\Delta_r/r^4$, then we can rewrite this as $\ham_p^2r=-2\Delta_r\wt\mu^{-2}(\pa_r\wt\mu)z^2$. We have already seen that $\pa_r\wt\mu$ has a single root $r_p\in(r_-,r_+)$, and $(r-r_p)\pa_r\wt\mu<0$ for $r\neq r_p$. Therefore, $\ham_p^2r=0$ implies (still assuming $\ham_p r=0$) $r=r_p$. We rephrase this to show that the only trapping occurs in the cotangent bundle over $r=r_p$: Let $F(r)=(r-r_p)^2$, then $\ham_p F=2(r-r_p)\ham_p r$ and $\ham_p^2F=2(\ham_p r)^2+2(r-r_p)\ham_p^2 r$. Thus, if $\ham_p F=0$, then either $r=r_p$, in which case $\ham_p^2 F=2(\ham_p r)^2>0$ unless $\ham_p r=0$, or $\ham_p r=0$, in which case $\ham_p^2 F=2(r-r_p)\ham_p^2 r>0$ unless $r=r_p$. So $\ham_p F=0,p=0$ implies either $\ham_p^2 F>0$ or $r=r_p,\ham_p r=0$, i.e.
\[
  (r,\omega;\xi,\eta) \in \Gamma_\semi := \Bigl\{(r_p,\omega;0,\eta) \colon \frac{r^4}{\Delta_r}z^2=|\eta|^2\Bigr\},
\]
so $\Gamma_\semi$ is the only trapping in $T^*X$, and $F$ is an escape function. We compute the linearization of the $\ham_p$-flow at $\Gamma_\semi$ in the normal coordinates $r-r_p$ and $\xi$, to wit
\begin{align*}
  \ham_p\begin{pmatrix}r-r_p \\ \xi\end{pmatrix} &= \begin{pmatrix} 0 & 2r_p^4\wt\mu|_{r=r_p} \\ 2(n-3)r_p^{-4}(\wt\mu|_{r=r_p})^{-2} z^2 & 0 \end{pmatrix}\begin{pmatrix} r-r_p \\ \xi \end{pmatrix} \\
	 & \quad + \cO(|r-r_p|^2+|\xi|^2),
\end{align*}
where we used $\pa_{rr}\wt\mu|_{r=r_p} = -2(n-3)r_p^{-4}$, which gives $\pa_r\wt\mu=-2(n-3)r_p^{-4}(r-r_p)+\cO(|r-r_p|^2)$. The eigenvalues of the linearization are therefore
\[
  \pm 2r_p\left(\frac{n-1}{1-\frac{n-1}{n-3}r_p^2\lambda}\right)^{1/2},
\]
which reduces to the expression given in \cite[p.\ 85]{VasyMicroKerrdS} in the case $n=4$, where $r_p=3M_\bullet=\frac{3}{2}r_s$ with $r_s=2M_\bullet$, and $\lambda=\Lambda/3$. In particular, the minimal expansion rate for the semiclassical rescaling of $\Box$ at the trapping $\Gamma_\semi$ is
\[
  \numin = 2r_p^{-1}\left(\frac{n-1}{1-\frac{n-1}{n-3}r_p^2\lambda}\right)^{1/2}>0.
\]
The expansion rate of the flow within the trapped set is $0$ by spherical symmetry; note that integral curves of $\ham_p$ on $\Gamma_\semi$ are simply unit speed geodesics of the round unit sphere $\Sph^{n-2}$. This shows the normal hyperbolicity (in fact, $r$-normal hyperbolicity for every $r$) of the trapping and finishes the proof of Theorem~\ref{ThmWaveExpansion}.

For later reference, we note that the spacetime trapped set, i.e.\ the set of points in phase space that never escape through either horizon along the Hamilton flow, is given by
\begin{equation}
\label{EqTrappedSet}
  \Gamma = \{ (t,r=r_p,\omega; \sigma,\xi=0,\eta)\colon \sigma^2=\Psi^2|\eta|^2 \},
\end{equation}
where $\Psi=\alpha r^{-1}$, $\Psi'(r_p)=0$.

\section{Pseudodifferential inner products}
\label{SecPsdoInner}

We now develop a general theory of pseudodifferential inner products, which we apply to the setting of Theorem~\ref{ThmWaveExpansion} in \S\ref{SecSubprincipalLaplace}.

We work on a complex rank $N$ vector bundle $\cE$ over the smooth compact $n$-dimensional manifold $X$ without boundary. We will define \emph{pseudodifferential inner products} on $\cE$, which are inner products depending on the position in phase space $T^*X$, rather than merely the position in the base $X$. As indicated in the introduction, we achieve this by replacing ordinary inner products by pseudodifferential operators whose symbols are inner products on the bundle $\pi^*\cE\to T^*X\setminus 0$, where $\pi\colon T^*X\setminus 0\to X$ is the projection.

In our application, we will use b-pseudodifferential inner products on tensor bundles over the spacetime manifold $\ol M$, but since the discussion in this section is purely symbolic, we work with standard \psdo{}s throughout; see also Remark~\ref{RmkOtherCalculi}.

\subsection{Notation}
\label{SubsecPsdoInnerNotation}

Let $\cV$ be a complex $N$-dimensional vector space. We denote by $\ol\cV$ the complex conjugate of $\cV$, i.e.\ $\ol\cV=\cV$ as sets, and the identity map $\iota\colon\cV\to\ol\cV$ is antilinear, so $\iota(\lambda v)=\ol\lambda\iota(v)$ for $v\in\cV$, $\lambda\in\C$, which defines the linear structure on $\ol\cV$. (We prefer to write $\iota(v)$ rather than $\ol v$ to prevent possible confusion with taking complex conjugates in complexifications of real vector spaces.) A Hermitian inner product $H$ on $\cV$ is thus a linear map $H\colon\cV\otimes\ol{\cV}\to\C$ such that $H(u,\iota(v))=\ol{H(v,\iota(u))}$ for $u,v\in\cV$, and $H(u,\iota(u))>0$ for all non-zero $u\in\cV$. This can be rephrased this in terms of the linear map $B\colon\cV\to\ol{\cV}^*$ defined by $B(u)=H(u,\cdot)$ and the natural dual pairing of $\ol{\cV}^*$ with $\ol\cV$, namely $\la Bu,\iota(v)\ra=\ol{\la Bv,\iota(u)\ra}$, and $\la Bu,\iota(u)\ra>0$ for all non-zero $u\in\cV$.

A linear map $A\colon\cV\to\ol{\cV}^*$ has an adjoint $A^*\colon\cV\to\ol{\cV}^*$, which is also linear, satisfying $\la Au,\iota(v)\ra=\ol{\la A^*v,\iota(u)\ra}$. The symmetry of a Hermitian inner product $B$ as above is then simply expressed by $B=B^*$. Similarly, a linear map $P\colon\cV\to\cV$ has an adjoint $P^*\colon\ol{\cV}^*\to\ol{\cV}^*$ defined by $\la\ol{\ell},\iota(Pv)\ra=\la P^*\ol{\ell},\iota(v)\ra$ for $\ol{\ell}\in\ol{\cV}^*$ and $v\in\cV$. These definitions of adjoints of maps $A\colon\cV\to\ol{\cV}^*$ and $P\colon\cV\to\cV$ are compatible in the sense that $(AP)^*=P^*A^*$. Furthermore, if $B\colon\cV\to\ol{\cV}^*$ is a Hermitian inner product and $Q\colon\cV\to\cV$ is invertible, then $B_1=Q^*BQ$ defines another Hermitian inner product, $\la B_1u,\iota(v)\ra = \la BQu,\iota(Qv)\ra$.

Now, given an inner product $B$ on $\cV$ and any map $P\colon\cV\to\cV$, the adjoint $P^{*B}$ of $P$ with respect to $B$ is the unique map $P^{*B}\colon\cV\to\cV$ such that $\la BPu,\iota(v)\ra=\la Bu,\iota(P^{*B}v)\ra$ for all $u,v\in\cV$. We find a formula for $P^{*B}$ by computing
\[
  \la BPu,\iota(v)\ra = \ol{\la B^*(B^*)^{-1}P^*B^*v,\iota(u)\ra} = \la Bu,\iota((BPB^{-1})^*v)\ra,
\]
i.e.\ $P^{*B}=(BPB^{-1})^*=B^{-1}P^*B$. The self-adjointness of $P$ with respect to $B$ is thus expressed by the equality $P=B^{-1}P^*B$.

If $\cE$ is a complex rank $N$ vector bundle, we can similarly define the complex conjugate bundle $\ol{\cE}$ as well as adjoints of vector bundle maps $\cE\to\cE$ and $\cE\to\ol{\cE}^*$. We can also define adjoints of pseudodifferential operators mapping between these bundles: For convenience, we remove the dependence of adjoints on a volume density on $X$ by tensoring all bundles with the half-density bundle $\Omega^\half$ over $X$, then we have a natural pairing
\[
  (\ol\cE^*\otimes\Omega^\half)_x \times (\ol\cE\otimes\Omega^\half)_x \ni (\ol\ell,\iota(v)) \mapsto \la\ol\ell,\iota(v)\ra \in \Omega^1_x, \quad x\in X,
\]
Thus, an operator $A\in\Psi^m(X,\cE\otimes\Omega^\half,\ol{\cE}^*\otimes\Omega^\half)$ has an adjoint $A^*\in\Psi^m(X,\cE\otimes\Omega^\half,\ol{\cE}^*\otimes\Omega^\half)$ defined by
\[
  \int_X \la A^*u,\iota(v)\ra = \int_X \ol{\la Av,\iota(u)\ra},
\]
with principal symbol $\sigma_m(A^*)=\sigma_m(A)^*\in S^m(T^*X\setminus 0,\pi^*\Hom(\cE,\ol{\cE}^*))$, and likewise $P\in\Psi^m(X,\cE\otimes\Omega^\half)$ has an adjoint $P^*\in\Psi^m(X,\ol{\cE}^*\otimes\Omega^\half)$ with $\sigma_m(P^*)=\sigma_m(P)^*$.

\subsection{Definition of pseudodifferential inner products; adjoints}
\label{SubsecPsdoInnerDef}

We work with classical, i.e.\ one-step polyhomogeneous, symbols and operators, and denote by $S^m_{\mathrm{hom}}(T^*X\setminus 0)$ symbols which are homogeneous of degree $m$ with respect to dilations in the fibers of $T^*X\setminus 0$.

\begin{definition}
\label{DefPsdoInnerProduct}
  A \emph{pseudodifferential inner product} (or $\Psi$-inner product) \emph{on the vector bundle $\cE\to X$} is a pseudodifferential operator $B\in\Psi^0(X;\cE\otimes\Omega^\half,\ol{\cE}^*\otimes\Omega^\half)$ satisfying $B=B^*$, and such that moreover the principal symbol $\sigma^0(B)=b\in S^0_{\mathrm{hom}}(T^*X\setminus 0;\pi^*\Hom(\cE,\ol{\cE}^*))$ of $B$ satisfies
  \begin{equation}
  \label{EqPsdoInnerPosDef}
    \la b(x,\xi)u,\iota(u)\ra > 0
  \end{equation}
  for all non-zero $u\in\cE_x$, where $x\in X$, $\xi\in T^*_x X\setminus 0$. If the context is clear, we will also call the sesquilinear pairing
  \[
    \CI(X,\cE\otimes\Omega^\half)\times\CI(X,\cE\otimes\Omega^\half) \ni (u,v) \mapsto \int_X \la B(x,D)u,\iota(v)\ra
  \]
  the pseudodifferential inner product associated with $B$.
\end{definition}

In particular, the principal symbol $b$ of $B$ is a Hermitian inner product on $\pi^*\cE$. Conversely, for any $b\in S^0_{\mathrm{hom}}(T^*X\setminus 0;\pi^*\Hom(\cE,\ol{\cE}^*))$ satisfying $b=b^*$ and \eqref{EqPsdoInnerPosDef}, there exists a $\Psi$-inner product $B$ with $\sigma^0(B)=b$; indeed, simply take $\wt B$ to be any quantization of $b$ and put $B=\frac{1}{2}(\wt B+\wt B^*)$.

\begin{rmk}
\label{RmkOtherCalculi}
  While we will develop the theory of $\Psi$-inner products only in the standard calculus on a closed manifold, everything works \emph{mutatis mutandis} in other settings as well. Thus, in the b-calculus of Melrose \cite{MelroseAPS}, $\Psib$-inner products on a manifold with boundary are defined similarly to $\Psi$-inner products, except that adjoints are defined on the space $\CIdot$ of functions vanishing to infinite order at the boundary, and the space of `trivial' (with respect to their symbolic order) operators is now $\Psib^{-\infty}$, likewise for the scattering calculus \cite{MelroseGeometricScattering}, replacing `b' by `sc.' In the semiclassical calculus on a closed manifold, adjoints are again defined on $\CI$, but the space of `trivial' operators is now $h^\infty\Psih^{-\infty}$, and suitable factors of $h$ need to be put in for computations involving subprincipal symbols.
\end{rmk}

We next discuss adjoints of \psdo{}s relative to $\Psi$-inner products.

\begin{definition}
\label{DefPsdoAdjoint}
  Let $B$ be a $\Psi$-inner product, and let $P\in\Psi^m(X,\cE\otimes\Omega^\half)$, then $P^{*B}\in\Psi^m(X,\cE\otimes\Omega^\half)$ is called an \emph{adjoint of $P$ with respect to $B$} if there exists an operator $R\in\Psi^{-\infty}(X,\cE\otimes\Omega^\half,\ol{\cE}^*\otimes\Omega^\half)$ such that
  \begin{equation}
  \label{EqPsdoAdjoint}
    \int \la BPu,\iota(v)\ra = \int \la Bu,\iota(P^{*B}v)\ra + \int \la Ru,\iota(v)\ra
  \end{equation}
  for all $u,v\in\CI(X,\cE\otimes\Omega^\half)$.
\end{definition}

\begin{rmk}
  This definition and the following lemma have straightforward generalizations to the case that $P$ maps sections of $\cE$ into sections of another vector bundle $\cF$, provided a ($\Psi$-)inner product on $\cF$ is given.
\end{rmk}

\begin{lemma}
\label{LemmaAdjoint}
  In the notation of Definition~\ref{DefPsdoAdjoint}, the adjoint of $P$ with respect to $B$ exists and is uniquely determined modulo $\Psi^{-\infty}(X,\cE\otimes\Omega^\half)$. In fact, $P=(BPB^-)^*$, where $B^-$ is a parametrix for $B$. Moreover, $(P^{*B})^{*B}=P$ modulo $\Psi^{-\infty}(X,\cE\otimes\Omega^\half)$. In particular, $\Im^B P=\frac{1}{2i}(P-P^{*B})$ is self-adjoint with respect to $B$ (i.e.\ its own adjoint modulo $\Psi^{-\infty}$).
\end{lemma}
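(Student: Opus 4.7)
The plan is to construct $P^{*B}$ explicitly and derive everything else from its form together with the defining property \eqref{EqPsdoAdjoint}. Since the principal symbol $b=\sigma^0(B)$ is a positive definite Hermitian inner product on $\pi^*\cE$, the operator $B$ is elliptic, so it admits a parametrix $B^-\in\Psi^0(X;\ol\cE^*\otimes\Omega^\half,\cE\otimes\Omega^\half)$ with $BB^-\equiv I$ and $B^-B\equiv I$ modulo $\Psi^{-\infty}$. Motivated by the finite-dimensional identity $P^{*B}=B^{-1}P^*B$ recalled in Section~\ref{SubsecPsdoInnerNotation}, I would set
\[
  P^{*B} := B^- P^* B \in \Psi^m(X,\cE\otimes\Omega^\half),
\]
where $P^*:\ol\cE^*\otimes\Omega^\half \to \ol\cE^*\otimes\Omega^\half$ is the operator adjoint of $P$ in the sense of that subsection.

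To verify \eqref{EqPsdoAdjoint} is then a direct calculation combining two ingredients: the Hermitian property $B=B^*$, which translates to $\int\la Bw,\iota(v)\ra = \int\ol{\la Bv,\iota(w)\ra}$ for all test sections $v,w$, and the defining relation $\la\ol{\ell},\iota(Pu)\ra=\la P^*\ol{\ell},\iota(u)\ra$. Starting from $\int\la BPu,\iota(v)\ra$, one applies Hermitian-ness to move $Pu$ to the second slot, uses the $P^*$-relation with $\ol{\ell}=Bv$, inserts $B^-B\equiv I$, and finally applies Hermitian-ness in reverse to recover $\int\la Bu,\iota(B^-P^*Bv)\ra$ up to a remainder against a smoothing kernel.

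Uniqueness follows by a standard ellipticity argument: if $Q_1,Q_2\in\Psi^m$ both satisfy \eqref{EqPsdoAdjoint}, their difference $D$ satisfies $\int\la Bu,\iota(Dv)\ra = \int\la Ru,\iota(v)\ra$ with $R\in\Psi^{-\infty}$, for all test $u,v$. Rewriting via the Hermitian property of $B$ shows the Schwartz kernel of $BD$ is smooth, i.e.\ $BD\in\Psi^{-\infty}$, and applying $B^-$ on the left gives $D\equiv B^-(BD)\equiv 0$ modulo $\Psi^{-\infty}$. The involutivity $(P^{*B})^{*B}\equiv P$ then drops out abstractly without any further commutation: by the Hermitian property of $B$ applied in both directions and \eqref{EqPsdoAdjoint} for $P^{*B}$, one checks that $P$ itself satisfies the defining relation for an adjoint of $P^{*B}$; by the uniqueness just proved, $(P^{*B})^{*B}\equiv P$ mod $\Psi^{-\infty}$. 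Finally, because $\iota$ is antilinear, the map $Q\mapsto Q^{*B}$ is antilinear (scaling by $a\in\C$ produces $\bar a Q^{*B}$), so
\[
  (\Im^B P)^{*B} = \overline{\tfrac{1}{2i}}\,(P-P^{*B})^{*B} = -\tfrac{1}{2i}\bigl(P^{*B}-(P^{*B})^{*B}\bigr) \equiv \tfrac{1}{2i}(P-P^{*B}) = \Im^B P
\]
modulo $\Psi^{-\infty}$.

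There is no serious analytic obstacle here: everything is a formal manipulation paralleling the finite-dimensional computations of Section~\ref{SubsecPsdoInnerNotation}. The only real care needed is bookkeeping — distinguishing the two different adjoint conventions (for maps $\cE\to\cE$ versus $\cE\to\ol\cE^*$), tracking that $P\mapsto P^*$ (and thus $P\mapsto P^{*B}$) is antilinear under complex scaling because of the antilinearity of $\iota$, and uniformly absorbing the parametrix errors $BB^-\equiv I$, $B^-B\equiv I$ into $\Psi^{-\infty}$ at each step.
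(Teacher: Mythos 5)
Your proof is correct and tracks the paper's closely in its overall structure (construct a candidate via the parametrix $B^-$, verify \eqref{EqPsdoAdjoint}, prove uniqueness by ellipticity of $B$), but differs at two points in a way worth noting. First, you take $P^{*B}:=B^-P^*B$ rather than the paper's $(BPB^-)^*$; these agree modulo $\Psi^{-\infty}$ because $B=B^*$ and $(B^-)^*$ is again a parametrix of $B$, so either form is a legitimate representative. Second, and this is the more substantive divergence: for the involutivity $(P^{*B})^{*B}\equiv P$, the paper first replaces $B^-$ by its symmetrization $\tfrac12(B^-+(B^-)^*)$ and then computes $(BP^{*B}B^-)^*=B^-BPB^-B\equiv P$ directly, whereas you show abstractly that $P$ satisfies the \emph{defining relation} \eqref{EqPsdoAdjoint} for an adjoint of $P^{*B}$ (by conjugating the relation for $P$ and using Hermitian symmetry of $B$ to swap slots), and then appeal to the uniqueness you've already established. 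Your route is a bit cleaner conceptually since it never needs to normalize $B^-$; the paper's is more computational but makes the formula transparent. Finally, you spell out the antilinearity of $Q\mapsto Q^{*B}$ to derive $(\Im^B P)^{*B}\equiv\Im^B P$, a detail the paper leaves implicit behind ``In particular''; that is worth including.
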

\begin{proof}
  Let $B^-$ be a parametrix of $B$ and put $R_L=I-B^-B\in\Psi^{-\infty}(X,\cE\otimes\Omega^\half)$. Then
  \[
    \int \la BPu,\iota(v)\ra = \int \la BPB^- Bu,\iota(v)\ra + \la BPR_Lu,\iota(v)\ra,
  \]
  hence \eqref{EqPsdoAdjoint} holds with $P^{*B}=(BPB^-)^*$ and $R=BPR_L$. To show the uniqueness of $P^{*B}$ modulo smoothing operators, suppose that $\wt P$ is another adjoint of $P$ with respect to $B$, with error term $\wt R$ (i.e.\ \eqref{EqPsdoAdjoint} holds with $P^{*B}$ and $R$ replaced by $\wt P$ and $\wt R$). Then
  \begin{align*}
    \int\ol{\la B(P^{*B}-\wt P)v,\iota(u)\ra}&=\int \la Bu, \iota((P^{*B}-\wt P)v)\ra = \int \la (\wt R-R)u,\iota(v)\ra \\
	  &= \int\ol{\la(\wt R-R)^*v,\iota(u)\ra}
  \end{align*}
  for $u,v\in\CI(X,\cE\otimes\Omega^\half)$, so $B(P^{*B}-\wt P)=(\wt R-R)^*\in\Psi^{-\infty}(X,\cE\otimes\Omega^\half,\ol{\cE}^*\otimes\Omega^\half)$, and the ellipticity of $B$ implies $P^{*B}-\wt P\in\Psi^{-\infty}(X,\cE\otimes\Omega^\half)$, as claimed.

  Since $B$ is self-adjoint, we can assume that $B^-$ is self-adjoint by replacing it by $\frac{1}{2}(B^-+(B^-)^*)$ (which changes $B^-$ by an operator in $\Psi^{-\infty}$). Then the second claim follows from
  \[
    (P^{*B})^{*B} = (BP^{*B}B^-)^* = B^-BPB^-B = P
  \]
  modulo $\Psi^{-\infty}(X,\cE\otimes\Omega^\half)$.
\end{proof}

We note that self-adjointness on the operator level implies self-adjointness on the symbolic level:

\begin{lemma}
\label{LemmaSelfAdjointSymbol}
  Suppose $P\in\Psi^m(X,\cE\otimes\Omega^\half)$ is self-adjoint with respect to $B$. Then its principal symbol $p$ is self-adjoint with respect to $b=\sigma^0(B)$, i.e.
  \[
    \la b(x,\xi)p(x,\xi)u,\iota(v)\ra = \la b(x,\xi)u,\iota(p(x,\xi)v)\ra,\quad x\in X,\xi\in T_x X,u,v\in\cE_x.
  \]
\end{lemma}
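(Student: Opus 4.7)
The plan is to pass from the operator-level self-adjointness $P \equiv P^{*B}$ modulo $\Psi^{-\infty}$ to a pointwise identity on principal symbols, and then to rewrite that identity using the definition of fiberwise adjoints recalled in Section~\ref{SubsecPsdoInnerNotation}.

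First, I would convert the defining relation \eqref{EqPsdoAdjoint}, with $P^{*B}$ replaced by $P$, into a symbolically convenient form. Starting from $\int\la BPu,\iota(v)\ra \equiv \int\la Bu,\iota(Pv)\ra$ modulo smoothing errors, and using $B=B^*$ twice to rewrite the right-hand side as
\[
\int\la Bu,\iota(Pv)\ra = \int\ol{\la BPv,\iota(u)\ra} = \int\la (BP)^*u,\iota(v)\ra \equiv \int\la P^*Bu,\iota(v)\ra,
\]
(where the last step uses the composition rule $(BP)^* = P^* B^* = P^* B$ for adjoints verified at the end of Section~\ref{SubsecPsdoInnerNotation}), I deduce $BP - P^*B \in \Psi^{-\infty}(X,\cE\otimes\Omega^\half,\ol{\cE}^*\otimes\Omega^\half)$. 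Since both $BP$ and $P^*B$ map $\cE\otimes\Omega^\half$ to $\ol{\cE}^*\otimes\Omega^\half$ and lie in $\Psi^m$, this difference has a well-defined order-$m$ principal symbol, and its vanishing gives the pointwise identity $bp = p^*b$ as a section of $\pi^*\Hom(\cE,\ol{\cE}^*)$ (using $\sigma^0(B)=b$, $\sigma^m(P)=p$, and $\sigma^m(P^*)=p^*$).

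Finally, I would pair this symbol identity with $\iota(v)$ and invoke the defining property of the fiberwise adjoint $p^*\colon\ol{\cE}^*\to\ol{\cE}^*$ of $p\colon\cE\to\cE$, namely $\la p^*\ol\ell,\iota(v)\ra = \la\ol\ell,\iota(pv)\ra$, applied with $\ol\ell = b u$, to obtain
\[
\la bpu,\iota(v)\ra = \la p^*bu,\iota(v)\ra = \la bu,\iota(pv)\ra,
\]
which is the claimed fiberwise self-adjointness of $p$ with respect to $b$. The only real subtlety is the bundle bookkeeping: one must track carefully which operators map between $\cE$ and $\ol{\cE}^*$ (and their half-density twists) so that each expression such as $BP - P^*B$ represents an operator between a single pair of bundles, and so that the compatibility of the two notions of adjoint from Section~\ref{SubsecPsdoInnerNotation} can be invoked. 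There is no genuine analytic obstacle, since the entire argument is a formal symbol-level computation modulo $\Psi^{-\infty}$.
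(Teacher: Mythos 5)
Your proof is correct and reaches the same conclusion as the paper's, by essentially the same mechanism (reduce the operator-level self-adjointness to a symbol-level identity and unwind the fiberwise adjoint). The paper's proof is a one-liner that invokes the explicit formula $P^{*B} = (BPB^-)^*$ from Lemma~\ref{LemmaAdjoint}, takes principal symbols of both sides, and reads off $p = b^{-1}p^*b = p^{*b}$; you instead work directly from the defining relation~\eqref{EqPsdoAdjoint}, use $B=B^*$ to convert it to $BP - P^*B \in \Psi^{-\infty}$, and then take principal symbols of this difference. Your route is slightly more self-contained (no parametrix for $B$ is needed), at the cost of an extra manipulation with complex conjugates; there is no mathematical daylight between the two.
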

\begin{proof}
  The hypothesis on $P$ means $(BPB^-)^*=P$ modulo $\Psi^{-\infty}$, thus on the level of principal symbols, $p=b^{-1}p^* b=p^{*b}$ (see \S\ref{SubsecPsdoInnerNotation} for the notation used), which proves the claim.
\end{proof}

We now specialize to the case that $P\in\Psi^m(X,\cE\otimes\Omega^\half)$ has a real, scalar principal symbol, which is the case of interest in our application, see \eqref{EqProofOp}. Fix a coordinate system of $X$ and a local trivialization of $\cE$, then the full symbol of $P$ is a sum of homogeneous symbols $p\sim p_m+p_{m-1}+\ldots$, with $p_j$ homogeneous of degree $j$ and valued in complex $N\times N$ matrices. We recall from \cite[\S{18}]{HormanderAnalysisPDE3} that the subprincipal symbol
\begin{equation}
\label{EqScalarSubprincipal}
  \sigma_\sub(P)=p_{m-1}(x,\xi)-\frac{1}{2i}\sum_j\pa_{x_j\xi_j}p_m(x,\xi) \in S^{m-1}_{\mathrm{hom}}(T^*X\setminus 0,\C^{N\times N})
\end{equation}
is well-defined under changes of coordinates; however, it does depend on the choice of local trivialization of $\cE$. As explained in \S\ref{SecProof}, we need to understand (the size of) the principal symbol of
\[
  \Im^B P:=\frac{1}{2i}(P-P^{*B})
\]
for such $P$ in a local trivialization of $\cE$. We first give a formula computing this symbol in a local trivialization; we will present an invariant formulation in Proposition~\ref{PropImagSymbolInv} below.

\begin{lemma}
\label{LemmaImagSymbol}
  Let $P\in\Psi^m(X,\cE\otimes\Omega^\half)$ be a principally real and scalar, and let $B=b(x,D)$ be a $\Psi$-inner product on $\cE$. Then $\Im^B P\in\Psi^{m-1}(X,\cE\otimes\Omega^\half)$ has the principal symbol
  \begin{equation}
  \label{EqImagSymbol}
    \sigma^{m-1}(\Im^B P)=\Im^b\sigma_\sub(P) + \frac{1}{2}b^{-1}\ham_p(b),
  \end{equation}
  where $\Im^b\sigma_\sub(P)=\frac{1}{2i}\bigl(\sigma_\sub(P)-\sigma_\sub(P)^{*b}\bigr)$. Here, we interpret $b$ and $\sigma_\sub(P)$ as $N\times N$ matrices of scalar-valued symbols using a local frame of $\cE$ and the corresponding dual frame of $\ol{\cE}^*$, and the action of $\ham_p$ is component-wise.
\end{lemma}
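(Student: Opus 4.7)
The plan is to work in a local coordinate chart and trivialization of $\cE$, and to reduce the claim to a direct full-symbol computation at order $m-1$. The starting point is Lemma~\ref{LemmaAdjoint}, which gives $P^{*B} = (BPB^-)^*$ modulo $\Psi^{-\infty}$, and hence
\[
  B(P - P^{*B}) \equiv BP - P^*B \pmod{\Psi^{-\infty}(X,\cE\otimes\Omega^\half,\ol\cE^*\otimes\Omega^\half)}.
\]
Since $B$ is elliptic of order $0$ with invertible principal symbol $b$, we have $\sigma^{m-1}(\Im^B P) = (2i)^{-1}\, b^{-1}\, \sigma^{m-1}(BP - P^*B)$, so the task reduces entirely to computing the principal symbol of $BP - P^*B$ at order $m-1$.

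Expanding the full left symbols as $b \sim b_0 + b_{-1} + \cdots$ and $p \sim p_m + p_{m-1} + \cdots$, the order-$m$ contributions $b_0 p_m$ and $p_m^* b_0 = p_m b_0$ to $[BP]_m$ and $[P^*B]_m$ cancel, since $p_m$ is real and scalar. At order $m-1$, I would apply the standard composition formula for left symbols to obtain
\[
  [BP]_{m-1} = b_0 p_{m-1} + b_{-1} p_m - i \sum_j \pa_{\xi_j} b_0 \cdot \pa_{x_j} p_m
\]
and an analogous expression for $[P^*B]_{m-1}$; the adjoint formula moreover gives $[P^*]_{m-1} = p_{m-1}^* - i \sum_j \pa_{x_j \xi_j} p_m$, again using reality and scalarity of $p_m$. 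Scalarity of $p_m$ lets the $b_{-1}p_m$ terms cancel in the difference, and lets the two cross terms from the composition formula combine into a single Hamilton derivative $i\, \ham_{p_m} b_0$; collecting yields
\[
  [BP - P^*B]_{m-1} = b_0 p_{m-1} - p_{m-1}^* b_0 + i \sum_j \pa_{x_j\xi_j} p_m \cdot b_0 + i\, \ham_{p_m} b_0.
\]

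The final step is to left-multiply by $(2i)^{-1} b_0^{-1}$ and match with \eqref{EqImagSymbol}. The Hamilton-derivative contribution $\tfrac{1}{2} b_0^{-1} \ham_{p_m} b_0$ appears immediately. To identify the other three terms with $\Im^b \sigma_\sub(P)$, I use that, by scalarity of $p_m$, the quantity $-(2i)^{-1}\sum_j \pa_{x_j\xi_j} p_m$ is a real scalar multiple of the identity, so $\sigma_\sub(P)^{*b} = p_{m-1}^{*b} + (2i)^{-1}\sum_j \pa_{x_j\xi_j} p_m$; the sign flip of the derivative term under the fiber adjoint is exactly what makes
\[
  \tfrac{1}{2i}(\sigma_\sub(P) - \sigma_\sub(P)^{*b}) = \tfrac{1}{2i}(p_{m-1} - p_{m-1}^{*b}) + \tfrac{1}{2} \sum_j \pa_{x_j\xi_j} p_m,
\]
which, combined with the Hamilton term, recovers \eqref{EqImagSymbol}. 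The main obstacle I anticipate is purely bookkeeping, in carefully tracking the sign conventions of the composition and adjoint formulas; the conceptual content of the lemma is that the two \emph{separately noninvariant} first-order derivative corrections---one from the composition formula applied to $BP$ versus $P^*B$, one from the adjoint formula applied to $P^*$---recombine, thanks to scalarity of $p_m$, into the invariant Hamilton derivative $b^{-1}\ham_p(b)$ and the $b$-adjoint correction that promotes $p_{m-1}$ to $\sigma_\sub(P)$.
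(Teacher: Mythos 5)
Your proof is correct and takes essentially the same route as the paper, which also works in a local coordinate chart and trivialization and computes the full left symbol of $B^-(BP - P^*B)$ modulo $S^{m-2}$ via the composition and adjoint formulas; your version simply computes $\sigma^{m-1}(BP-P^*B)$ and divides by $b_0$ afterwards, which is the same calculation. One small verbal slip: $-(2i)^{-1}\sum_j\pa_{x_j\xi_j}p_m$ is a purely \emph{imaginary} scalar multiple of the identity, not a real one --- it is $(2i)^{-1}$ times the real scalar $\sum_j\pa_{x_j\xi_j}p_m$ --- and it is exactly this imaginariness (together with the fact that, being scalar, it commutes with $b$) that produces the sign flip under the $b$-adjoint; the displayed formula for $\sigma_\sub(P)^{*b}$ and the conclusion are nevertheless correct.
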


If the bundle was trivial, $\cE=X\times\C^N$, and $b$ the standard Hermitian inner product on $\cE$, i.e.\ $b$ is the $N\times N$ identity matrix with respect to standard bases, then \eqref{EqImagSymbol} simply states the well-known fact $\sigma^{m-1}(P-P^*)=\sigma_\sub(P)-\sigma_\sub(P)^*$. The presence of a non-trivial inner product $B$ causes an extra twist in the symbol of $P-P^{*B}$, which is the second term in \eqref{EqImagSymbol}.

\begin{proof}[Proof of Lemma~\ref{LemmaImagSymbol}]
  We compute in a local coordinate system over which $\cE$ and $\ol{\cE}$ are trivialized by a choice of $N$ linearly independent sections $e_1,\ldots,e_N$, and $\cE^*$ and $\ol{\cE}^*$ are trivialized by the dual sections $e_1^*,\ldots,e_N^*\in\cE^*$ satisfying $e_i^*(e_j)=\delta_{ij}$, extended linearly as linear functionals on $\cE$, resp.\ on $\ol{\cE}$, in the case of $\cE^*$, resp.\ $\ol{\cE}^*$. We trivialize $\Omega^\half$ using the section $|dx|^\half$. Let $b_{ij}(x,\xi)=\la b(x,\xi)e_j,\iota(e_i)\ra$, then $b(x,\xi)=(b_{ij}(x,\xi))_{i,j=1,\ldots,N}$, a linear map from the fibers of $\cE$ to the fibers of $\ol{\cE}^*$, is the symbol of $B$ in local coordinates: If $u=\sum_j u_je_j|dx|^\half$ and $v=\sum_j v_je_j|dx|^\half$, we have
  \[
    \la b(x,\xi)u,\iota(v)\ra = \sum_{ij} b_{ij}(x,\xi)u_j\cdot\ol{v_i}|dx|,
  \]
  thus
  \[
    \int \la Bu,\iota(v)\ra = \sum_{ij} \int (b_{ij}(x,D)u_j)\cdot\ol{v_j}\,dx.
  \]
  Note that $b(x,\xi)$ is a Hermitian matrix, i.e.\ $b_{ij}(x,\xi)=\ol{b_{ji}(x,\xi)}$, and in fact $B=b(x,D)$ is self-adjoint (with respect to the standard Hermitian inner product on $\C^N$). The adjoint of $P=p(x,D)$, which in local coordinates is simply an $N\times N$ matrix of scalar \psdo{}s, with respect to $B$ is the operator $\wt P=\wt p(x,D)$ such that
  \[
    \int b(x,D)p(x,D)u\cdot\ol{v}\,dx = \int b(x,D)u\cdot\ol{\wt p(x,D)v}\,dx + \int Ru\cdot\ol{v}\,dx,\quad R\in\Psi^{-\infty}.
  \]
  Let $B^-:=b^-(x,D)$ be a parametrix for $b(x,D)$, in particular $b^-(x,\xi)=b(x,\xi)^{-1}$ modulo $S^{-1}$; we may assume $B^-(x,D)^*=B^-(x,D)$. We then have
  \[
    \wt p(x,D) = b^-(x,D)p(x,D)^*b(x,D)
  \]
  by Lemma~\ref{LemmaAdjoint}. Write $p(x,\xi)=p_m(x,\xi)+p_{m-1}(x,\xi)+\ldots$, then the full symbol of $P-\wt P=B^-(BP-P^*B)$ (where $P^*$ is the adjoint of $P$ with respect to the standard Hermitian inner product on $\C^N$) is given, modulo $S^{m-2}$, by
  \begin{align*}
    b^{-1}&\Bigl(bp_m+\frac{1}{i}\sum_j\pa_{\xi_j}b\pa_{x_j}p_m + bp_{m-1} \\
	  & \quad - p_m^* b - \frac{1}{i}\sum_j(\pa_{x_j\xi_j}p_m^*)b - \frac{1}{i}\sum_j \pa_{\xi_j}p_m^*\pa_{x_j}b - p_{m-1}^*b\Bigr) \\
	  & = \Bigl(p_{m-1}-\frac{1}{2i}\sum_j\pa_{x_j\xi_j}p_m\Bigr) - b^{-1}\Bigl(p_{m-1}-\frac{1}{2i}\sum_j\pa_{x_j\xi_j}p_m\Bigr)^*b + ib^{-1}\ham_{p_m}(b),
  \end{align*}
  where we used that $p_m$ is scalar and real. The claim follows.
\end{proof}

\subsection{Invariant formalism for subprincipal symbols of operators acting on bundles}
\label{SubsecInvariantSubprincipal}

We continue to denote by $P\in\Psi^m(X,\cE\otimes\Omega^\half)$ a principally scalar \psdo{}\ acting on the vector bundle $\cE$, with principal symbol $p$; we remark that the discussion until Proposition~\ref{DefInvSubpr} works for principally non-scalar operators as well, with mostly notational changes. We will show how to modify the definition, given in equation~\eqref{EqScalarSubprincipal}, of the subprincipal symbol of $P$, expressed in terms of a local trivialization of $\cE$, in an invariant fashion, i.e.\ in a way that is both independent of the choice of local trivialization and of local coordinates on $X$. This provides a completely invariant formulation of Lemma~\ref{LemmaImagSymbol}. The advantage of being able to study principally scalar, subprincipally non-scalar operators in an invariant manner, apart from the naturality, is of course the freedom to choose particularly convenient local frames in concrete applications; for a warped product type spacetime geometry \eqref{EqSDSMetric} for instance, the cotangent bundle has a very natural partial frame.

Let $U\subset X$ be an open subset over which $\cE$ is trivial, and pick a frame $e(x)=\{e_1(x),\ldots,e_N(x)\}$ trivializing $\cE$ over $U$. Let us write $P^e$ for $P$ in the frame $e$, i.e.\ $P^e=(P^e_{jk})_{j,k=1,\ldots,N}$ is the $N\times N$ matrix of operators $P^e_{jk}\in\Psi^m(U,\Omega^\half)$ defined by $P(\sum_k u_k(x)e_k(x))=\sum_{jk}P^e_{jk}(u_k)e_j(x)$, $u_k\in\CI(U,\Omega^\half)$. Then $\sigma_\sub^e(P)$ as defined in \eqref{EqScalarSubprincipal}, with the superscript making the choice of frame explicit, is simply an $N\times N$ matrix of scalar symbols:
\[
  \sigma_\sub^e(P) = (\sigma_\sub(P^e_{jk}))_{j,k=1,\ldots,N}.
\]
We will consider the effect of a change of frame on the subprincipal symbol \eqref{EqScalarSubprincipal}. Thus, let $C\in\CI(U,\End(\cE))$ be a change of frame, i.e.\ $C(x)$ is invertible for all $x\in X$. Then $e_j(x)=C(x)e'_j(x)$ defines another frame $e'(x)=\{e'_1(x),\ldots,e'_N(x)\}$ of $\cE$ over $U$. One easily computes
\[
  \sigma_\sub^{e'}(C^{-1}PC) = (C^{e'})^{-1}\sigma_\sub^{e'}(P)C^{e'} - i(C^{e'})^{-1}\ham_p(C^{e'}),
\]
with $\ham_p$ interpreted as the diagonal $N\times N$ matrix $1_{N\times N} \ham_p$ of first order differential operators, and $C^{e'}$ is the matrix of $C$ in the frame $e'$. Now note that $(C^{-1}PC)^{e'}=P^e$ and $(C^{e'})^{-1}\ham_p(C^{e'})=(C^{e'})^{-1}\ham_p C^{e'} - \ham_p$; thus, we obtain
\begin{equation}
\label{EqInvSubprProof}
  \sigma_\sub^e(P) - i\ham_p = (C^{e'})^{-1}\bigl(\sigma_\sub^{e'}(P) - i\ham_p\bigr)C^{e'}
\end{equation}
Thus, viewing $\sigma_\sub^{e'}(P)-i\ham_p$ as the $N\times N$ matrix (in the frame $e'$) of a differential operator acting on $\CI(T^*X\setminus 0,\pi^*\cE)$, the right hand side of \eqref{EqInvSubprProof} is the matrix of the same differential operator, but expressed in the frame $e$. Notice that the principal symbol $p$ of $P$ as a scalar, i.e.\ diagonal, $N\times N$ matrix of symbols, is well-defined independently of the choice of frame. To summarize:

\begin{definition}
\label{DefInvSubpr}
  For $P\in\Psi^m(X,\cE\otimes\Omega^\half)$ with scalar principal symbol $p$, there is a well-defined \emph{subprincipal operator} $S_\sub(P)\in\Diff^1(T^*X\setminus 0,\pi^*\cE)$, homogeneous of degree $m-1$ with respect to dilations in the fibers of $T^*X\setminus 0$, defined as follows: If $\{e_1(x),\ldots,e_N(x)\}$ is a local frame of $\cE$, define the operators $P_{jk}\in\Psi^m(X,\Omega^\half)$ by $P(\sum_k u_k(x)e_k(x))=\sum_{jk} P_{jk}(u_k)e_j(x)$, $u_k\in\CI(X,\Omega^\half)$. Then
  \[
    S_\sub(P)\Bigl(\sum_k q_k(x,\xi)e_k(x)\Bigr) := \sum_{jk} (\sigma_\sub(P_{jk})q_k)e_j - i\sum_k(\ham_p q_k)e_k.
  \]
  In shorthand notation, $S_\sub(P) = \sigma_\sub(P) - i\ham_p$, understood in a local frame as a matrix of first order differential operators. We emphasize the dependence on the order of the operator by writing $S_{\sub,m}(P)$, so that for $P\in\Psi^m(X,\cE\otimes\Omega^\half)$, we have $S_{\sub,m+1}(P)=\sigma_m(P)$.
\end{definition}

In \S\ref{SubsecToyCase}, we present a very simple example of the formalism developed here. For our application, we will need to compute the subprincipal operator of the Laplace-Beltrami or Hodge-d'Alembert operator acting on sections of the tensor bundle, see \S\ref{SecSubprincipalLaplace}.

\begin{rmk}
  For $\Psib$-inner products, the subprincipal operator of an operator $P\in\Psib^m(X,\cE\otimes\Omega_\bl^\half)$ acting on $\cE$-valued b-half-densities is an element of $\Diffb^1(\Tb^*X\setminus 0,\pi_\bl^*\cE)$, where $\pi_\bl\colon\Tb^*X\setminus 0\to X$ is the projection. In the semiclassical setting, $P\in\Psih^m(X,\cE\otimes\Omega^\half)$, we have $S_\sub(P)\in\Diff^1(T^*X,\pi^*\cE)$; this is not a semiclassical operator, since the leading order part (the Hamilton derivative) and the zeroth order part (coming from the subprincipally non-scalar nature of $P$) are of the same size: For instance, \eqref{EqSubprImagPart} below for semiclassical operators $P,Q$ reads: $\sigma_\semi^{m+m'-1}(h^{-1}[P,Q])=[S_\sub(P),q]$.
\end{rmk}

\begin{rmk}
\label{RmkDencker}
  As already mentioned in \S\ref{SecIntro}, Dencker \cite{DenckerPolarization} proved that polarization sets propagate along so-called Hamilton orbits, which are line subbundles of the pullback of $\pi^*\cE$ to null-\-bi\-char\-ac\-ter\-istics, and which are spanned by sections of this bundle which are parallel with respect to a partial connection $D_P$. In the case of interest for us, when $P$ is principally scalar, his definition \cite[Equation~(4.6)]{DenckerPolarization}, choosing $\wt p=\id$, agrees with our definition of $S_\sub(P)$ up to a factor of $i$. Since $S_\sub(P)$ is only defined for principally scalar operators, whereas $D_P$ is defined for general operators of real principal type, but in general only up to rescaling (the motivation for introducing $D_P$ in \cite{DenckerPolarization} being quite different from the objective of the present paper), we use the notation $S_\sub(P)$ for clarity.
\end{rmk}

We can now express the symbols of commutators and imaginary parts in a completely invariant fashion:

\begin{prop}
\label{PropImagSymbolInv}
  Let $P\in\Psi^m(X,\cE\otimes\Omega^\half)$ be a \psdo{}\ with scalar principal symbol $p$.
  \begin{enumerate}[leftmargin=\enummargin]
  \item Suppose $Q\in\Psi^{m'}(X,\cE\otimes\Omega^\half)$ is an operator acting on $\cE$-valued half-densities, with principal symbol $q$. (We do not assume $Q$ is principally scalar.) Then
    \begin{equation}
    \label{EqSubprImagPart}
	  \sigma^{m+m'-1}([P,Q]) = [S_\sub(P),q].
	\end{equation}
	If $Q$ is elliptic with parametrix $Q^-$, then
	\begin{equation}
	\label{EqSubprChangeOfBasis}
	  S_\sub(QPQ^-)=q S_\sub(P)q^{-1}.
	\end{equation}
  \item Suppose in addition that $p$ is real. Let $B$ be a $\Psi$-inner product on $\cE$ with principal symbol $b$, then
    \begin{equation}
	\label{EqInvImagSymbol}
	  \sigma^{m-1}(\Im^B P) = \Im^b S_\sub(P),
	\end{equation}
	where $\Im^b S_\sub(P)=\frac{1}{2i}\bigl(S_\sub(P)-S_\sub(P)^{*b}\bigr)$; we take the adjoint of the differential operator $S_\sub(P)$ with respect to the inner product $b$ on $\pi^*\cE$ and the symplectic volume density on $T^*X$.
  \end{enumerate}
\end{prop}

In the context of Remark~\ref{RmkDencker}, one can check that \eqref{EqSubprImagPart} is equivalent to \cite[Equation~(4.7)]{DenckerPolarization}.

\begin{proof}[Proof of Proposition~\ref{PropImagSymbolInv}]
  We verify this in a local frame $e(x)=\{e_1(x),\ldots,e_N(x)\}$ of $\cE$. We compute
  \begin{align*}
	S_\sub&(P)\Bigl(\sum_{jk}q_{jk}(x,\xi)u_k(x,\xi)e_j(x)\Bigr) \\
	  &= \sum_{j\ell}\Bigl(\sum_k\sigma_\sub(P)_{jk}q_{k\ell} - i \ham_p(q_{j\ell})\Bigr)u_\ell e_j - iq_{j\ell}\ham_p(u_\ell)e_j - i q_{j\ell}u_\ell e_j \ham_p,
  \end{align*}
  while
  \begin{align*}
    q S_\sub&(P)\Bigl(\sum_\ell u_\ell(x,\xi)e_\ell(x)\Bigr) \\
      &= \sum_{j\ell}\Bigl(\sum_k q_{jk}\sigma_\sub(P)_{k\ell}\Bigr) u_\ell e_j - iq_{j\ell}\ham_p(u_\ell)e_j - iq_{j\ell}u_\ell e_j \ham_p,
  \end{align*}
  hence $S_\sub(P)q-qS_\sub(P) = [\sigma_\sub(P),q] - i\ham_p(q)$ as an endomorphism (a zeroth order differential operator acting on sections of $\cE$) of $\cE$ in the frame $e$, which equals $\sigma^{m+m'-1}([P,Q])$ according to the usual (full) symbolic calculus.

  Furthermore,
  \begin{align*}
    S_{\sub,m}&(QPQ^-)=S_{\sub,m}(P)+S_{\sub,m}(Q[P,Q^-]) \\
	  &=S_{\sub,m}(P) + q\sigma_{m+m'-1}([P,Q^-]) = S_{\sub,m}(P) + q[S_{\sub,m}(P),q^{-1}] \\
	  & = qS_{\sub,m}(P)q^{-1},
  \end{align*}
  noting that $Q[P,Q^-]$ is of order $m-1$.

  For the second part, we have $S_\sub(P)^{*b} = \sigma_\sub(P)^{*b} - (i\ham_p)^{*b} = b^{-1}\sigma_\sub(P)^* b + ib^{-1}(\ham_p)^* b$, where $(\ham_p)^*$ is the adjoint of $\ham_p$ as an operator acting on $\CIc(T^*X\setminus 0)$, and we equip $T^*X$ with the natural symplectic volume density $|dx\,d\xi|$. We have $(\ham_p)^* = -\ham_{\bar p} = -\ham_p$ since $p$ is real. Therefore,
  \begin{align*}
    S_\sub(P)-S_\sub(P)^{*b} &= \sigma_\sub(P)-\sigma_\sub(P)^{*b} - i\ham_p + ib^{-1}\ham_p b \\
	  & = \sigma_\sub(P)-\sigma_\sub(P)^{*b} + ib^{-1}\ham_p(b),
  \end{align*}
  which indeed gives \eqref{EqImagSymbol} upon division by $2i$.\qedhere
\end{proof}

In particular, \eqref{EqInvImagSymbol} provides a very elegant point of view for understanding the imaginary part of a principally scalar and real (pseudo)differential operator with respect to a $\Psi$-inner product $B$, as already indicated in the introduction: For instance, the principal symbol of the imaginary part $\Im^B P$ vanishes (or is small relative to $b=\sigma^0(B)$) in a subset of phase space if and only if the imaginary part of the first order differential operator $S_\sub(P)$ on $T^*X\setminus 0$ has vanishing (or small with respect to the fiber inner product $b$ of $\pi^*\cE$) coefficients in this subset.

\subsection{A simple example}
\label{SubsecToyCase}

On $\R^n_x=\R_{x_1}\times\R^{n-1}_{x'}$, we consider the operator $P=D_{x_1}+A\in\Psi^1(\R^n,\C^N)$, where $A=A(x,D)\in\Psi^0(\R^n,\C^N)$ is independent of $x_1$. Trivializing the half-density bundle over $\R^n$ via $|dx|^\half$, we can consider $P$ as an operator in $\Psi^1(\R^n,\C^N\otimes\Omega^\half)$. Its principal symbol is $\sigma_1(P)(x,\xi)=\xi_1$, where we use the standard coordinates on $T^*\R^n$, i.e.\ writing covectors as $\xi\,dx$, so the Hamilton vector field is $\ham_{\sigma_1(P)}=\pa_{x_1}$; moreover, in the trivialization of $\C^N$ by means of its standard basis, $\sigma_\sub(P)(x,\xi)=A(x,\xi)$. Thus, the subprincipal operator of $P$ is
\[
  S_\sub(P)(x,\xi)=A(x,\xi)-i\pa_{x_1} \in \Diff^1(T^*\R^n\setminus 0,\pi^*\C^N),
\]
with $A$ homogeneous of degree $0$ in the fiber variables. Suppose we are interested in bounding $\frac{1}{2i}(P-P^*)$ on $Z:=T^*_{\{x'=0\}}\R^n\setminus 0$ relative to a suitably chosen inner product. Let us assume that $A(0,\xi)$ is nilpotent for all $|\xi|=1$, and that in fact at $x=0$ and $|\xi|=1$, we can choose a smooth frame $e_1(\xi),\ldots,e_N(\xi)$ of the bundle $\pi^*\C^N\to T^*\R^n\setminus 0$ so that $A(0,\xi)$, written in the basis $e_1(\xi),\ldots,e_N(\xi)$, is a single Jordan block with zeros on the diagonal and ones directly above. Extend the $e_j$ by homogeneity (of degree $0$) in the fiber variables, and define them to be constant in the $x_1$-direction along $Z$, i.e.\ $e_j(x_1,0;\xi)=e_j(0,0;\xi)$, and extend them in an arbitrary manner to a neighborhood of $Z$. 

Now, on $Z$ we have $Ae_j=e_{j-1}$, writing $e_0:=0$. Introduce a new frame $e'_j:=\eps^j e_j$ with $\eps>0$ fixed, then $Ae'_j=\eps e'_{j-1}$. Define the inner product $b$ on $\pi^*\C^N$ by $\la b(x,\xi)(e'_i(x,\xi)),\iota(e'_j(x,\xi))\ra=\delta_{ij}$, that is, $\{e'_1,\ldots,e'_N\}$ is an orthonormal frame for $b$. Then on $Z$, we find that $\Im^b S_\sub(P)$ (which is of order $0$) in the frame $\{e'_1,\ldots,e'_N\}$ is given by the matrix which is zero apart from entries $\eps/2i$ directly above and $-\eps/2i$ directly below the diagonal. Thus, defining the $\Psi$-inner product $B=b(x,D)$, we have arranged that $\|\sigma_0(\Im^B P)(x,\xi)\|_b\leq\eps$ on $Z$. Since $\sigma_0(\Im^B P)$ is self-adjoint with respect to $b$, this is equivalent to the statement that its eigenvalues are bounded from above and below by $\eps$ and $-\eps$, respectively.

In Proposition~\ref{PropPsdoInnerAsConjugation} below we will show in general how to express the adjoint of an operator $P$ with respect to an $\Psi$-inner product as the ordinary adjoints of a conjugated version of $P$; in our example at hand, we can implement this very concretely as follows: If $v_j$ denotes the standard basis of $\C^N$ and $\la B_0(v_i),\iota(v_j)\ra=\delta_{ij}$ the standard inner product on $\C^N$ (the particular choice of an ordinary inner product being irrelevant, see the statement of Proposition~\ref{PropPsdoInnerAsConjugation}), define the map $q(x,\xi)\in S^0_{\mathrm{hom}}(T^*\R^n\setminus 0,\pi^*\C^N)$ by $q(x,\xi)e'_j(x,\xi)=v_j$. Let $Q=q(x,D)$ and denote by $Q^-$ a parametrix of $Q$, then we find that $QPQ^-\in\Psi^1(\R^n,\C^N)$ satisfies $\|\sigma_0(\Im^{B_0}QPQ^-)\|_{B_0}\leq\eps$.

If $A$ has several Jordan blocks not all of which are nilpotent, one can (under the assumption of the existence of a smooth family of Jordan bases) similarly construct a $\Psi$-inner product so that the imaginary part of $A$ relative to it is bounded by the maximal imaginary part of the eigenvalues of $A$ (plus $\eps$) from above, and by the minimal imaginary part (minus $\eps$) from below.

\subsection{Interpretation of pseudodifferential inner products in traditional terms}
\label{SubsecInterpretation}

Since the spectral gaps result \cite{DyatlovSpectralGaps} which we will invoke for our application is stated in terms of ordinary inner products, we now show how to interpret the imaginary part $\Im^B P$ of an operator $P$ with respect to a $\Psi$-inner product $B$ in terms of the imaginary part of a conjugated version of $P$ with respect to a standard inner product; we remark however that the proof of \cite[Theorem~1]{DyatlovSpectralGaps} would go through essentially unchanged if one used a $\Psi$-inner product directly.

\begin{prop}
\label{PropPsdoInnerAsConjugation}
  Let $B$ be a $\Psi$-inner product on $\cE$. Then for any positive definite Hermitian inner product $B_0\in\CI(X,\Hom(\cE\otimes\Omega^\half,\ol{\cE}^*\otimes\Omega^\half))$ on $\cE$, there exists an elliptic operator $Q\in\Psi^0(X,\End(\cE\otimes\Omega^\half))$ such that $B-Q^*B_0Q\in\Psi^{-\infty}(X,\Hom(\cE\otimes\Omega^\half,\ol{\cE}^*\otimes\Omega^\half))$.
  
  In particular, denoting by $Q^-\in\Psi^0(X,\End(\cE\otimes\Omega^\half))$ a parametrix of $Q$, we have for any $P\in\Psi^m(X,\cE\otimes\Omega^\half)$ with real and scalar principal symbol.
  \begin{equation}
  \label{EqPsdoInnerToNormal}
    Q(\Im^B P)Q^- = \Im^{B_0}(QPQ^-),
  \end{equation}
  and $\sigma^{m-1}(\Im^B P)$ and $\sigma^{m-1}(\Im^{B_0}(QPQ^-))$ (which are self-adjoint with respect to $\sigma^0(B)$ and $B_0$, respectively, hence diagonalizable) have the same eigenvalues.
\end{prop}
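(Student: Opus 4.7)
The first assertion is an operator-level square root extraction, which I would carry out by successive approximation in the symbol calculus. At the principal-symbol level, I need $q_0 \in S^0_{\mathrm{hom}}(T^*X\setminus 0;\pi^*\End(\cE))$ satisfying $q_0^* b_0 q_0 = b$, where $b = \sigma^0(B)$. Since $b$ and $b_0$ are both positive definite Hermitian fiber inner products on $\pi^*\cE$, the endomorphism $A := b_0^{-1} b \in \End(\pi^*\cE)$ is self-adjoint with respect to $b_0$ and positive definite, so it has a unique positive, $b_0$-self-adjoint square root $q_0 := A^{1/2}$, which is smooth and homogeneous of degree $0$; a direct computation gives $q_0^* b_0 q_0 = b$. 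Let $Q_1$ be any quantization of $q_0$, so $R_0 := Q_1^* B_0 Q_1 - B \in \Psi^{-1}$ and $R_0^* = R_0$ since both $B$ and $Q_1^* B_0 Q_1$ are self-adjoint maps $\cE \otimes \Omega^{\half} \to \ol{\cE}^* \otimes \Omega^{\half}$.

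Inductively, suppose $Q_k \in \Psi^0(X,\End(\cE \otimes \Omega^{\half}))$ has been constructed with $R_{k-1} := Q_k^* B_0 Q_k - B \in \Psi^{-k}$ and $R_{k-1}^* = R_{k-1}$. Seeking $Q_{k+1} = Q_k(I + S_k)$ with $S_k \in \Psi^{-k}$, I expand
\[
  Q_{k+1}^* B_0 Q_{k+1} - B = R_{k-1} + B S_k + S_k^* B \mod \Psi^{-k-1},
\]
so at the principal-symbol level in $\Psi^{-k}$, writing $r = \sigma^{-k}(R_{k-1})$ and $s = \sigma^{-k}(S_k)$, I must solve $b s + s^* b = -r$. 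Since $r^* = r$ and $b^* = b$, the choice $s = -\tfrac{1}{2} b^{-1} r$ works: $(b^{-1} r)^* = r^* (b^{-1})^* = r b^{-1}$, hence $s^* b = -\tfrac{1}{2} r$ and $b s = -\tfrac{1}{2} r$. Quantizing $s$, symmetrizing the new error to preserve self-adjointness, and iterating, I obtain a sequence $Q_k$ with successively improved remainders; standard asymptotic summation in $\Psi^0$ produces $Q$ elliptic (as $\sigma^0(Q) = q_0$ is invertible) with $Q^* B_0 Q - B \in \Psi^{-\infty}$.

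For the conjugation identity \eqref{EqPsdoInnerToNormal}, let $Q^-$ be a parametrix for $Q$; then $(Q^-)^* \equiv (Q^*)^-$ is a parametrix for $Q^*$, and from $B \equiv Q^* B_0 Q$ one gets $B^- \equiv Q^- B_0^{-1} (Q^-)^*$ modulo $\Psi^{-\infty}$. Using Lemma~\ref{LemmaAdjoint} and the identity $(AB)^* = B^* A^*$ (and $B_0^* = B_0$, $(A^*)^* = A$), I compute
\[
  P^{*B} \equiv (B P B^-)^* \equiv \bigl(Q^* B_0 (Q P Q^-) B_0^{-1} (Q^-)^*\bigr)^* \equiv Q^- (Q P Q^-)^{*B_0} Q
\]
modulo $\Psi^{-\infty}$, whence $Q P^{*B} Q^- \equiv (Q P Q^-)^{*B_0}$, so that $Q(\Im^B P) Q^- \equiv \Im^{B_0}(Q P Q^-)$. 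Taking principal symbols yields $\sigma^{m-1}(\Im^{B_0}(QPQ^-)) = \sigma^0(Q)\, \sigma^{m-1}(\Im^B P)\, \sigma^0(Q)^{-1}$; since both sides are similar endomorphisms, their eigenvalues coincide, and self-adjointness with respect to $b_0$ and $b$ respectively (cf.\ Lemma~\ref{LemmaSelfAdjointSymbol}) guarantees diagonalizability. The only subtle point throughout is the careful tracking of the distinction between adjoints of maps $\cE \to \cE$ and of maps $\cE \to \ol{\cE}^*$, but once the bookkeeping from Section~\ref{SubsecPsdoInnerNotation} is in place the computation is mechanical.
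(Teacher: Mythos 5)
Your argument is correct and reaches the same conclusion, but the construction of the principal symbol $q_0$ differs from the paper's in an interesting way. The paper constructs $q_1$ with $q_1^*b_0q_1 = b$ by a homotopy method: it sets $b_t = (1-t)b_0 + tb$, derives the linear ODE $\partial_t q_t = \tfrac{1}{2}q_tb_t^{-1}(b-b_0)$, $q_0 = \id$, and checks that the solution satisfies $q_t^*b_0q_t = b_t$ for all $t$ by a uniqueness-of-ODE argument. You instead take the $b_0$-positive square root $q_0 = (b_0^{-1}b)^{1/2}$ directly; your verification that $A = b_0^{-1}b$ is $b_0$-self-adjoint and positive definite, hence has a unique smooth homogeneous positive square root with $q_0^*b_0 = b_0 q_0$ and therefore $q_0^*b_0q_0 = b$, is correct and arguably more transparent. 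The ODE approach avoids invoking the smooth functional calculus for operator families, while yours buys brevity and explicitness; both are sound. Your iteration step also differs mildly: you use the multiplicative correction $Q_{k+1} = Q_k(I+S_k)$ and solve $bs + s^*b = -r$ with $s = -\tfrac12 b^{-1}r$, whereas the paper uses an additive correction $Q_{k+1} = Q_k + D_k$ and solves $d_k^*b_0q_1 + (b_0q_1)^*d_k = r_k$ with $d_k = \tfrac12((b_0q_1)^*)^{-1}r_k$ -- equivalent up to bookkeeping, and both rely on the fact that the error $Q_k^*B_0Q_k - B$ is automatically self-adjoint (so your remark about "symmetrizing the new error" is not actually needed). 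The second half of your proof -- the identity $P^{*B}\equiv Q^-(QPQ^-)^{*B_0}Q$ modulo $\Psi^{-\infty}$ and the conclusion about eigenvalues via similarity -- matches the paper's computation.
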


On a symbolic level, equation \eqref{EqPsdoInnerToNormal} is the same as equation~\eqref{EqSubprChangeOfBasis}.

\begin{proof}[Proof of Proposition~\ref{PropPsdoInnerAsConjugation}.]
  In order to shorten the notation, fix a global trivialization of $\Omega^\half$ over $X$ and use it to identify $\cE\otimes\Omega^\half$ with $\cE$, likewise for all other half-density bundles appearing in the statement. Denote the principal symbol of $B$ by $b\in S^0_{\mathrm{hom}}(T^*X\setminus 0,\pi^*\Hom(\cE,\ol{\cE}^*))$. We similarly put $b_0:=B_0$, which is an inner product on $\pi^*\cE$ that only depends on the base point.
  
  We start with on the symbolic level by constructing an elliptic symbol $q_1\in S^0_{\mathrm{hom}}(T^*X\setminus 0,\pi^*\End(\cE))$ such that $b=q_1^* b_0 q_1$; recall that $q_1^*\in S^0_{\mathrm{hom}}(T^*X\setminus 0,\pi^*\End(\ol{\cE}^*))$. For $t\in[0,1]$, define the Hermitian inner product $b_t:=(1-t)b_0+tb$. We will construct a differentiable family $q_t$ of symbols such that $b_t=q_t^* b_0 q_t$ for $t\in[0,1]$. Observe that for any such family, we have $\pa_t b_t=b-b_0 = (\pa_t q_t)^*b_0q_t + q_t^*b_0\pa_t q_t$, which suggests requiring $\pa_t q_t = \frac{1}{2}b_0^{-1}(q_t^*)^{-1}(b-b_0)$, which we can write as a linear expression in $q_t$ by noting that $(q_t^*)^{-1}=b_0 q_t b_t^{-1}$. Moreover, $q_0=\id$ is a valid choice for $q_t$ at $t=0$. Thus, we are led to define $q_t$, $t\in[0,1]$, as the solution of the ODE
  \[
    \pa_t q_t = \frac{1}{2}q_t b_t^{-1}(b-b_0),\quad q_0=\id.
  \]
  Reversing these arguments, for the solution $q_t$ we then have $q_t^*b_0 q_t=b_t$ for $t=0$, and both $q_t^*b_0 q_t$ and $b_t$ are solutions of the same ODE, namely
  \[
    \pa_t\wt b_t = \frac{1}{2}\bigl((b-b_0)b_t^{-1}\wt b_t + \wt b_t b_t^{-1}(b-b_0)\bigr), \quad \wt b_0=b_0,
  \]
  hence $q_t^*b_0 q_t=b_t$ for all $t\in[0,1]$.
  
  Let $Q_1\in\Psi^0(X,\End(\cE))$ be a quantization of $q_1$, then we conclude that $B-Q_1^* B_0Q_1\in\Psi^{-1}$. We iteratively remove this error to obtain a smoothing error: Suppose $Q_k\in\Psi^0(X,\End(\cE))$ is such that $B-Q_k^* B_0Q_k\in\Psi^{-k}$ for some $k\geq 1$. We will find $D_k\in\Psi^{-k}$, a quantization of $d_k\in S^{-k}_{\mathrm{hom}}(T^*X\setminus 0,\pi^*\cE)$, such that $Q_{k+1}:=Q_k+D_k$ satisfies $B-Q_{k+1}^* B_0Q_{k+1}\in\Psi^{-k-1}$. This is equivalent to the equality of symbols
  \[
     r_k:=\sigma^{-k}(B-Q_k^* B_0Q_k) = \sigma^{-k}(D_k^* B_0Q_k+Q_k^* B_0D_k) = d_k^* b_0q_1 + (b_0q_1)^* d_k,
  \]
  which in view of $r_k^*=r_k$ is satisfied for $d_k=\frac{1}{2}((b_0q_1)^*)^{-1}r_k$. We define $Q\in\Psi^0(X,\End(\cE))$ to be the asymptotic limit of the $Q_k$ as $k\to\infty$, i.e.\ $Q\sim Q_1+\sum_{k=1}^\infty D_k$, which thus satisfies $B-Q^* B_0Q\in\Psi^{-\infty}$. This proves the first part of the proposition.
  
  For the second part, denote parametrices of $B$ and $Q$ by $B^-$ and $Q^-$, respectively. Then, modulo operators in $\Psi^{-\infty}$, we have
  \[
    P^{*B} = (BPB^-)^* = (Q^* B_0Q PQ^- B_0^{-1} (Q^-)^*)^* = Q^-(QPQ^-)^{*B_0} Q,
  \]
  hence
  \[
    Q(P-P^{*B})Q^- = (QPQ^-) - (QPQ^-)^{*B_0}
  \]
  modulo $\Psi^{-\infty}$.
\end{proof}

\section{Subprincipal operators of tensor Laplacians}
\label{SecSubprincipalLaplace}

Let $(M,g)$ be a smooth manifold equipped with a metric tensor $g$ of arbitrary signature. Denote by $\cT_k M=\bigotimes^k T^*M$, $k\geq 1$, the bundle of (covariant) tensors of rank $k$ on $M$. The metric $g$ induces a metric (which we also call $g$) on $\cT_k M$. We study the symbolic properties of $\Delta_k=-\tr\nabla^2\in\Diff^2(M,\cT_k M)$, the Laplace-Beltrami operator on $M$ acting on the bundle $\cT_k M$. Denote by $G\in\CI(T^*M)$ the metric function, i.e.\ $G(x,\xi)=|\xi|_{G(x)}^2$, where $G$ is the dual metric of $g$.

\begin{prop}
\label{PropLaplaceSubpr}
  The subprincipal operator of $\Delta_k$ is
  \begin{equation}
  \label{EqLaplaceSubpr}
    S_\sub(\Delta_k)(x,\xi) = -i\conn{\pi^*\cT_k M}_{\ham_G} \in \Diff^1(T^*M\setminus 0,\pi^*\cT_k M),
  \end{equation}
  where $\conn{\pi^*\cT_k M}$ is the pullback connection, with $\pi\colon T^*M\setminus 0\to M$ being the projection.
\end{prop}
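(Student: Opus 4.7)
My plan is to reduce the identity to a zeroth-order pointwise check by matching the scalar principal parts of both sides, and then verify the remaining identity in a frame well adapted to an arbitrary point $x_0 \in M$. First, $\Delta_k = -\tr\nabla^2$ has principal symbol $G \cdot \id_{\cT_k M}$, since contracting two covariant derivatives with $g^{-1}$ gives $|\xi|_G^2$ acting as a scalar on tensors. In particular, Definition~\ref{DefInvSubpr} applies and $S_\sub(\Delta_k) \in \Diff^1(T^*M \setminus 0, \pi^*\cT_k M)$ is well-defined. In any local frame, its top-order part as a first-order differential operator is the scalar derivation $-i\ham_G \otimes \id$, matching the top-order part of $-i\conn{\pi^*\cT_k M}_{\ham_G}$. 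Their difference is therefore a section of $\End(\pi^*\cT_k M)$ that I will show vanishes everywhere.

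Fix an arbitrary $x_0 \in M$. Since both sides are frame- and coordinate-invariant, I may work in any convenient local trivialization of $\cT_k M$ near $x_0$. I choose geodesic normal coordinates $\{x^i\}$ centered at $x_0$ and trivialize $\cT_k M$ by the induced coordinate tensor frame $\{dx^{i_1} \otimes \cdots \otimes dx^{i_k}\}$. At $x = x_0$ this yields the simplifications $\Gamma^k_{ij}=0$, $\pa_l g^{ij} = 0$, $|g| = 1$, and $\pa_l |g| = 0$; moreover, since the coframe $\{dx^i\}$ is Levi-Civita-parallel at $x_0$, the connection matrix $\omega_j$ of the induced connection on $\cT_k M$ in this frame also vanishes at $x_0$.

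To place $\Delta_k$ in the half-density framework of Definition~\ref{DefInvSubpr}, I pass to $\tilde\Delta_k := |g|^{1/4}\Delta_k|g|^{-1/4}$ acting on $\cT_k M \otimes \Omega^\half$ trivialized via $|dx|^\half$; stationarity of $|g|$ at $x_0$ ensures this conjugation contributes nothing to the order-one full symbol at $x_0$. Expanding
\[
\Delta_k u = -g^{ij}\nabla_i\nabla_j u + g^{ij}\Gamma^k_{ij}\nabla_k u
\]
in the chosen frame, the first-order-in-$u$ coefficient of $\pa_k$ equals the matrix $g^{ij}\Gamma^k_{ij}\id - 2g^{ki}\omega_i$, which vanishes at $x_0$. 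Combined with the subprincipal correction $-\frac{1}{2i}\sum_j\pa_{x_j\xi_j}G = -i\pa_j g^{jk}\xi_k$ in formula \eqref{EqScalarSubprincipal}, which also vanishes at $x_0$, this gives $\sigma_\sub(\tilde\Delta_k)(x_0,\xi) = 0$. Hence $S_\sub(\Delta_k)|_{x=x_0} = -i\ham_G$ as operators on $\pi^*\cT_k M$ in our frame. On the right-hand side, in the same frame $-i\conn{\pi^*\cT_k M}_{\ham_G} = -i\ham_G - 2ig^{ij}\xi_j\omega_i$ using $\pi_*\ham_G = 2g^{ij}\xi_j\pa_{x_i}$; at $x_0$ the $\omega_i$-term drops out, yielding $-i\ham_G$ as well. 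Equality at $x_0$ in one frame, together with the invariance of both sides, concludes the proof.

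The main point of care is the bookkeeping of three first-order contributions---from $\Gamma$, from $\omega$, and from $\pa g^{ij}$---together with the half-density conjugation. The normal coordinates paired with the parallel coordinate tensor frame are chosen precisely so that each of these pieces vanishes individually at $x_0$, turning the identity into a trivial match. A coordinate-free direct computation in an arbitrary frame is possible but reduces the identity to the algebraic relation $g^{ij}\Gamma^k_{ij} + g^{jk}\Gamma^l_{lj} + \pa_j g^{jk} = 0$ coming from $\nabla_j g^{jk}=0$; the normal-coordinate approach bypasses this algebra and makes the geometric content transparent, namely that in any Levi-Civita-parallel frame $S_\sub(\Delta_k)$ is simply the naive scalar derivation $-i\ham_G$.
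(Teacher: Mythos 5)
Your proof is correct and follows essentially the same strategy as the paper's: work in normal coordinates centered at an arbitrary $x_0$, observe that both the subprincipal symbol of $\Delta_k$ and the connection coefficients of $\conn{\pi^*\cT_k M}$ vanish at $x_0$ in that frame, so both sides reduce to $-i\ham_G$ there, and conclude by invariance. You are somewhat more explicit than the paper about the half-density conjugation and the vanishing of the tensor-bundle connection matrix $\omega$ at $x_0$, which the paper treats implicitly; also note a harmless sign slip, $-\frac{1}{2i}\sum_j\pa_{x_j\xi_j}G = +i\,\pa_j g^{jk}\xi_k$ rather than $-i\,\pa_j g^{jk}\xi_k$, though this term vanishes at $x_0$ in any case.
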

\begin{proof}
  Since both sides of \eqref{EqLaplaceSubpr} are invariantly defined, it suffices to prove the equality in an arbitrary local coordinate system. At a fixed point $x_0\in M$, introduce normal coordinates so that $\pa_k g_{ij}=0$ at $x_0$. Then we schematically have
  \begin{align*}
    (\Delta_k u)_{i_1\ldots i_k} &= -g^{jk}u_{i_1\ldots i_k,jk} = -g^{jk}(\pa_k u_{i_1\ldots i_k,j}+\Gamma\cdot\pa u) \\
	  &= -g^{jk}\pa_{jk}u_{i_1\ldots i_k} + \pa(\Gamma\cdot u) + \Gamma\cdot\pa u \\
	  &= -g^{jk}\pa_{jk}u_{i_1\ldots i_k} + \Gamma\cdot\pa u + \pa\Gamma\cdot u,
  \end{align*}
  with $\Gamma$ denoting Christoffel symbols. This suffices to see that the full symbol of $\Delta_k$ in the local coordinate system is given by
  \[
    \sigma(\Delta_k)(x,\xi) = g^{jk}(x)\xi_j\xi_k + (x^j-x_0^j)\ell_j(x,\xi) + e(x),
  \]
  where $\ell_j(x,\xi)$ is a linear map in $\xi$ with values in $\End((\cT_k M)_x)$, and $e(x)$ is an endomorphism of $(\cT_k M)_x$. Therefore, $\sigma_\sub(\Delta_k)(x_0,\xi)=0$, since $\pa_i g^{jk}(x_0)=0$. Thus,
  \begin{equation}
  \label{EqLaplaceSubprLHS}
    S_\sub(\Delta_k)(x_0,\xi) = -i \ham_{|\xi|_g^2} = -2i g^{jk}\xi_k\pa_{x^j}.
  \end{equation}
  We now compute the right hand side of \eqref{EqLaplaceSubpr}. First, writing $dx^I=dx^{i_1}\otimes\cdots\otimes dx^{i_k}$ for multi-indices $I=(i_1,\ldots,i_k)$, we note that sections of $\pi^*\cT_k M$ are of the form $u_I(x,\xi)\,dx^I$, while pullbacks (under $\pi$) of sections of $\cT_k M$ are of the form $u_I(x)\,dx^I$. By definition, the pullback connection $\conn{\pi^*\cT_k M}$ is given by
  \[
    \conn{\pi^*\cT_k M}_{\pa_{x^j}}(u_I(x)\,dx^I) = \conn{\cT_k M}_{\pa_{x^j}}(u_I(x)\,dx^I), \quad \conn{\pi^*\cT_k M}_{\pa_{\xi_k}}(u_I(x)\,dx^I)=0
  \]
  on pulled back sections and extended to sections of the pullback bundle using the Leibniz rule; thus,
  \begin{align*}
    \conn{\pi^*\cT_k M}_{\pa_{x^j}}(u_I(x,\xi)\,dx^I) &= \conn{\cT_k M}_{\pa_{x^j}}(u_I(\cdot,\xi)\,dx^I)(x), \\
	\conn{\pi^*\cT_k M}_{\pa_{\xi_k}}(u_I(x,\xi)\,dx^I) &= \pa_{\xi_k}u_I(x,\xi)\,dx^I.
  \end{align*}
  Thus, in normal coordinates at $x_0\in M$, we simply have $\conn{\pi^*\cT_k M}_{\pa_{x^j}}=\pa_{x^j}$ and $\conn{\pi^*\cT_k M}_{\pa_{\xi_k}}=\pa_{\xi_k}$, therefore
  \[
    \conn{\pi^*\cT_k M}_{\ham_{|\xi|_g^2}} = 2g^{jk}\xi_k\pa_{x^j}
  \]
  at $x_0$, which verifies \eqref{EqLaplaceSubpr} in view of \eqref{EqLaplaceSubprLHS}.
\end{proof}

To simplify the study of the pullback connection on $\pi^*\cT_k M$ for general $k$, we observe that there is a canonical bundle isomorphism $\pi^*\cT_k M\cong\bigotimes^k \pi^*T^*M$; hence the connection $\conn{\pi^*\cT_k M}$ is simply the product connection on $\bigotimes^k\pi^*T^*M$. Therefore, if we understand certain properties of $S_\sub(\Delta_1)$, we can easily deduce them for $S_\sub(\Delta_k)$ for any $k$. In our application, we will need to choose a \emph{positive definite} pseudodifferential inner product $B_k=b_k(x,D)$ on the bundle $\cT_k M$ with respect to which $\Delta_k$ is arbitrarily close to being symmetric in certain subsets of phase space. Concretely, this means that we want the operator $S_\sub(\Delta_k)$ to be (almost) symmetric with respect to the inner product $b_k$ on $\pi^*\cT_k M$. The following lemma shows that it suffices to accomplish this for $k=1$:

\begin{lemma}
\label{LemmaNilpotentInheritance}
  Let $U\subset T^*M\setminus 0$ be open, and let $f\in\CI(U)$ be real-valued. Fix a Hermitian inner product $b$ (antilinear in the second slot) on $\pi^*T^*M$, and define $R\in\End(\pi^*T^*M)$ by requiring that
  \[
    \int_U \la i\conn{\pi^*T^*M}_{\ham_f} u,v\ra_b\,d\sigma - \int_U \la u,i\conn{\pi^*T^*M}_{\ham_f} v\ra_b\,d\sigma = \int_U \la u,Rv\ra_b\,d\sigma
  \]
  for all $u,v\in\CIc(U,\pi^*T^*M)$, where $d\sigma$ is the natural symplectic volume density on $T^*M$. There exists a constant $C_k>0$, independent of $U,f$ and $b$, such that the following holds: If $\sup_U \|R\|_b\leq\eps$ (using $b$ to measure the operator norm of $R$ acting on each fiber) for some $\eps>0$, then the inner product $b_k=\bigotimes^k b$ induced by $b$ on $\bigotimes^k\pi^* T^*M\cong\pi^*\cT_k M$ satisfies
  \[
    \int_U \la i\conn{\pi^*\cT_k M}_{\ham_f} u,v\ra_{b_k}\,d\sigma - \int_U \la u,i\conn{\pi^*\cT_k M}_{\ham_f} v\ra_{b_k}\,d\sigma = \int_U \la u,R_k v\ra_{b_k}\,d\sigma,
  \]
  $u,v\in\CIc(U,\pi^*\cT_k M)$, for $R_k\in\End(\pi^*\cT_k M)$ satisfying $\sup_U \|R_k\|_{b_k}\leq k\eps$.
\end{lemma}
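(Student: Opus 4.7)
The plan is to recognize that the integrated defining relation for $R$ is equivalent to a \emph{pointwise} endomorphism identity — $R$ equals $i$ times the $b$-incompatibility tensor of $\conn{\pi^*T^*M}$ along $\ham_f$ — and then to exploit the derivation structure of the product connection on $\bigotimes^k\pi^*T^*M$ together with the product structure of $b_k$. Concretely, I define $S\in\End(\pi^*T^*M)$ by
\[
  \ham_f\la u,v\ra_b = \la \conn{\pi^*T^*M}_{\ham_f}u,v\ra_b + \la u,\conn{\pi^*T^*M}_{\ham_f}v\ra_b + \la Su,v\ra_b,
\]
the pointwise measure of $b$-incompatibility of $\conn{\pi^*T^*M}$ along $\ham_f$; conjugating this identity (using that $\ham_f$ is real) shows $S$ is $b$-self-adjoint at every point. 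Since $\ham_f$ is a real Hamiltonian vector field and so preserves the symplectic volume $d\sigma$, one has $\int_U \ham_f\la u,v\ra_b\,d\sigma=0$ for $u,v\in\CIc(U,\pi^*T^*M)$. Substituting this into the defining relation for $R$ and using antilinearity of $b$ in the second slot reduces that relation to $\int_U\la u,Rv\ra_b\,d\sigma=\int_U\la u,iSv\ra_b\,d\sigma$ for all compactly supported $u,v$, which forces $R=iS$ pointwise and hence $\|R\|_b=\|S\|_b\le\eps$ in every fiber. The same construction produces $S_k\in\End(\pi^*\cT_k M)$ with $R_k=iS_k$.

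Next, I would use that $\conn{\pi^*\cT_k M}$ is the product connection on $\bigotimes^k\pi^*T^*M$ (as noted just before the lemma) and that $b_k=\bigotimes^k b$. Applying the scalar compatibility formula to each factor of a simple tensor $u_1\otimes\cdots\otimes u_k$ and using the product rule $\ham_f\prod_j\la u_j,v_j\ra_b=\sum_j(\ham_f\la u_j,v_j\ra_b)\prod_{i\neq j}\la u_i,v_i\ra_b$, the connection terms reassemble into the analogous connection pairing for $\conn{\pi^*\cT_k M}$, and the remainder gives
\[
  \la S_k(u_1\otimes\cdots\otimes u_k),v_1\otimes\cdots\otimes v_k\ra_{b_k} = \sum_{j=1}^k \la Su_j,v_j\ra_b\prod_{i\neq j}\la u_i,v_i\ra_b.
\]
Extending by linearity, $S_k=\sum_{j=1}^k S_{(j)}$, where $S_{(j)}=\id\otimes\cdots\otimes S\otimes\cdots\otimes\id$ places $S$ in the $j$-th slot.

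Finally, the standard Hilbert space identity $\|A\otimes\id\|_{b_k}=\|A\|_b$ — seen by working in a $b$-orthonormal frame fiberwise — gives $\|S_{(j)}\|_{b_k}=\|S\|_b$, and the triangle inequality yields $\|R_k\|_{b_k}=\|S_k\|_{b_k}\le k\|S\|_b\le k\eps$, so the constant $C_k=k$ works. The main subtlety is the pointwise identification $R=iS$: one must carefully track both the antilinearity of $b$ in the second slot (so $\la u,iw\ra_b=-i\la u,w\ra_b$) and the $b$-self-adjointness of $S$ when passing from the integrated identity for $R$ to a genuine pointwise equality.
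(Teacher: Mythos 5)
Your proof is correct and follows essentially the same route as the paper. The only difference is presentational: you factor the argument through a pointwise endomorphism $S$ (the $b$-incompatibility tensor of $\conn{\pi^*T^*M}$ along $\ham_f$), establish $R=iS$ fiberwise using self-adjointness of $S$ and the $\ham_f$-invariance of $d\sigma$, and then the identity $R_k=iS_k$ with $S_k=\sum_j\id\otimes\cdots\otimes S\otimes\cdots\otimes\id$ follows by a pointwise Leibniz computation. The paper instead works with the first-order operator $S=i\conn{\pi^*T^*M}_{\ham_f}$, keeps the whole argument at the level of integrated pairings, and reads off $R_k=R\otimes\id+\cdots+\id\otimes R$ directly from the integrated Leibniz rule plus the vanishing of $\int_U\ham_f(\cdot)\,d\sigma$; the norm bound then follows from the same triangle-inequality observation you use. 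The underlying calculation is identical, and both correctly give $C_k=k$.
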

\begin{proof}
  We show this for $k=2$, the proof for general $k$ being entirely analogous. Denote $S=i\conn{\pi^*T^*M}_{\ham_f}$, then $S_2=i\conn{\pi^*\cT_2 M}_{\ham_f}$ acts by $S_2(u_1\otimes u_2)=Su_1\otimes u_2+u_1\otimes Su_2$. Hence using $S(au)=aSu + i\ham_f(a)u$ for sections $u$ of $\pi^*T^*M$ and functions $a$ on $U$, we calculate
  \begin{align*}
    \int_U &\la S_2(u_1\otimes u_2),v_1\otimes v_2\ra_{b_2}\,d\sigma = \int_U \la Su_1,v_1\ra_b \la u_2,v_2\ra_b + \la u_1,v_1\ra_b\la Su_2,v_2\ra_b\,d\sigma \\
	  &= \int_U \Big\la u_1, S\bigl(v_1\la u_2,v_2\ra_b\bigr)\Big\ra_b + \int_U \Big\la u_2,S\bigl(v_2\la u_1,v_1\ra_b\bigr)\Big\ra_b\,d\sigma \\
	  &\qquad + \int_U \la u_1\otimes u_2, (R\otimes\id+\id\otimes R)(v_1\otimes v_2)\ra_{b_2}\,d\sigma \\
	  &= \int_U \la u_1\otimes u_2,S_2(v_1\otimes v_2)\ra_{b_2}\,d\sigma - i\int_U \ham_f(\la u_1,v_1\ra_b\la u_2,v_2\ra_b)\,d\sigma \\
	  &\qquad + \int_U \la u_1\otimes u_2,R_2(v_1\otimes v_2)\ra_{b_2}\,d\sigma \\
	  &= \int_U \la u_1\otimes u_2,S_2(v_1\otimes v_2)\ra_{b_2}\,d\sigma + \int_U \la u_1\otimes u_2,R_2(v_1\otimes v_2)\ra_{b_2}\,d\sigma
  \end{align*}
  with $R_2=R\otimes\id+\id\otimes R$, where we used that $\int_U\ham_f u\,d\sigma=-\int_U u\ham_f 1\,d\sigma=0$ for $u\in\CIc(U)$. From the explicit form of $R_2$, we see that $\|R_2\|_{b_2}\leq 2\eps$ indeed.
\end{proof}

\subsection{Warped product spacetimes}
\label{SubsecWarpedSubpr}

Let $X$ be an $(n-1)$-dimensional manifold equipped with a smooth Riemannian metric $h=h(x,dx)$, and let $\alpha\in\CI(X)$ be a positive function. We consider the manifold $M=\R_t\times X$, equipped with the Lorentzian metric
\begin{equation}
\label{EqMetricGeneral}
  g=\alpha^2\,dt^2-h.
\end{equation}
On such a spacetime, we have a natural splitting of 1-forms into their tangential and normal part relative to $\alpha\,dt$, i.e.
\begin{equation}
\label{EqFormDecomp}
  u = u_T + u_N\alpha\,dt.
\end{equation}
In this section, we will compute the form of $\conn{\pi^*T^*M}_{\ham_G}$ as a $2\times 2$ matrix of differential operators with respect to this decomposition. For brevity, we will use the notation $\pbconn{M}:=\conn{\pi^*T^*M}$, similarly $\pbconn{X}:=\conn{\pi^*T^*X}$, and we will moreover use the abstract index notation, fixing $x^0=t$, and $x'=(x^1,\ldots,x^{n-1})$ are coordinates on $X$ (independent of $t$). We let Greek indices $\mu,\nu,\lambda,\ldots$ run from $0$ to $n-1$, Latin indices $i,j,k,\ldots$ from $1$ to $n-1$. Moreover, the canonical dual variables $\xi_0=:\sigma$ and $\xi'=(\xi_1,\ldots,\xi_{n-1})$ on the fibers of $T^*M$ are indexed by decorated Greek indices $\wt\mu$ (running from $0$ to $n-1$) and Latin indices $\wt i,\wt j,\ldots$ (running from $1$ to $n-1$). If an index appears both with and without tilde in one expression, it is summed accordingly, for instance $a_j b_{\wt j}=\sum_{j=1}^n a_j b_{\wt j}$. Thus, for a section $u$ of $\pi^*T^*M$, we have
\[
  \pbconn{M}_\mu u_\nu = \conn{M}_\mu u_\nu, \quad \pbconn{M}_{\wt\mu} u_\nu = \pa_{\wt\mu}u_\nu,
\]
where we interpret $\conn{M}_\mu$ as acting on $u$ for fixed values of the fiber variables, i.e.\ viewing $u$ as a family of sections of $T^*M$ depending on the fiber variables. As before, we denote by $G$ the metric function on $T^*M$, and we let $H$ denote the metric function on $T^*X$, interpreted as a $(t,\sigma)$-independent function on $T^*M$. Lastly, we denote the Christoffel symbols of $(M,g)$ by $\chr{M}_{\mu\nu}^\kappa$, and those of $(X,h)$ by $\chr{X}_{ij}^k$.

We point out that once we discuss Schwarzschild-de Sitter space in the next section, in the region where $t_*=t$ (which we can in particular arrange near the trapped set), $\sigma$ in the present notation is equal to $-\sigma$ in the notation of \S\ref{SecProof}.

\begin{lemma}
\label{LemmaChristoffel}
  The Christoffel symbols of $M$ are given by:
  \begin{equation}
  \label{EqChristoffel}
  \begin{gathered}
    \chr{M}_{00}^0=0,\quad	\chr{M}_{i0}^0=\alpha^{-1}\alpha_i, \quad \chr{M}_{ij}^0=0, \\
	\chr{M}_{00}^k = \alpha h^{k\ell}\alpha_\ell, \quad \chr{M}_{i0}^k = 0, \quad \chr{M}_{ij}^k=\chr{X}_{ij}^k.
  \end{gathered}
  \end{equation}
\end{lemma}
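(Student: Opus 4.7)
The plan is to apply the standard Koszul formula
\[
  \chr{M}^{\kappa}_{\mu\nu} = \tfrac{1}{2} g^{\kappa\lambda}\bigl(\pa_\mu g_{\nu\lambda} + \pa_\nu g_{\mu\lambda} - \pa_\lambda g_{\mu\nu}\bigr)
\]
directly in the splitting $x^0 = t$, $x' = (x^1,\dots,x^{n-1})$, and to exploit two structural features of the metric \eqref{EqMetricGeneral}: first, the block-diagonal form $g_{00}=\alpha^2$, $g_{0i}=0$, $g_{ij}=-h_{ij}$, with inverse $g^{00}=\alpha^{-2}$, $g^{0i}=0$, $g^{ij}=-h^{ij}$, which forces any sum $g^{\kappa\lambda}(\cdots)$ to split into a purely temporal and a purely spatial piece; and second, the $t$-independence of $\alpha$ and $h$, which kills every term containing a $\pa_0$-derivative of a metric component. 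These two observations should reduce the six cases in \eqref{EqChristoffel} to short one-line computations.

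First I would handle the three symbols with upper index $0$. In $\chr{M}^0_{\mu\nu}$ only the $\lambda=0$ term contributes (since $g^{0i}=0$), so $\chr{M}^0_{\mu\nu} = \tfrac12 \alpha^{-2}(\pa_\mu g_{\nu 0} + \pa_\nu g_{\mu 0} - \pa_0 g_{\mu\nu})$; all $\pa_0$-derivatives vanish by stationarity, and $g_{\nu 0}$ is non-zero only for $\nu=0$, in which case it equals $\alpha^2$. This immediately gives $\chr{M}^0_{00}=0$, $\chr{M}^0_{ij}=0$, and $\chr{M}^0_{i0} = \tfrac12\alpha^{-2}\pa_i(\alpha^2) = \alpha^{-1}\alpha_i$.

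Next I would treat the three symbols with upper index $k$. Here only the $\lambda=\ell$ term contributes (since $g^{k0}=0$), so $\chr{M}^k_{\mu\nu} = -\tfrac12 h^{k\ell}(\pa_\mu g_{\nu\ell} + \pa_\nu g_{\mu\ell} - \pa_\ell g_{\mu\nu})$. For $\chr{M}^k_{i0}$, the first two terms vanish because $g_{0\ell}=0$ and $\pa_0=0$, and the third vanishes because $g_{i0}=0$. For $\chr{M}^k_{00}$, only $-\pa_\ell g_{00} = -\pa_\ell(\alpha^2)$ survives, yielding $\alpha h^{k\ell}\alpha_\ell$. For $\chr{M}^k_{ij}$, substituting $g_{j\ell}=-h_{j\ell}$ etc.\ produces exactly $\tfrac12 h^{k\ell}(\pa_i h_{j\ell} + \pa_j h_{i\ell} - \pa_\ell h_{ij}) = \chr{X}^k_{ij}$.

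There is really no hard step here; the only potential pitfall is bookkeeping the two overall minus signs that appear in $g_{ij} = -h_{ij}$ and $g^{ij} = -h^{ij}$, which must conspire in the $\chr{M}^k_{ij}$ computation so that the spatial Christoffel symbol $\chr{X}^k_{ij}$ is recovered with the correct sign — which it does, since the two signs cancel. The verification is thus a routine but careful direct computation from Koszul.
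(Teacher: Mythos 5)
Your proof is correct and takes essentially the same approach as the paper; the paper first records the lowered symbols $\chr{M}_{\kappa\mu\nu}=\tfrac12(\pa_\mu g_{\kappa\nu}+\pa_\nu g_{\kappa\mu}-\pa_\kappa g_{\mu\nu})$ and then raises the index, whereas you raise the index from the outset, but the computation and the key observations (block-diagonality of $g$ and $t$-independence) are identical.
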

\begin{proof}
  We have $g_{00}=\alpha^2$, $g_{0i}=g_{i0}=0$ and $g_{ij}=-h_{ij}$, and $g$ is $t$-independent, thus $\pa_0 g_{\mu\nu}=0$. Using $\chr{M}_{\kappa\mu\nu} = \frac{1}{2}(\pa_\mu g_{\kappa\nu} + \pa_\nu g_{\kappa\mu} - \pa_\kappa g_{\mu\nu})$, we then compute
  \begin{gather*}
    \chr{M}_{000}=0, \quad \chr{M}_{0i0}=\alpha\alpha_i, \quad \chr{M}_{0ij}=0, \\
	\chr{M}_{k00}=-\alpha\alpha_k, \quad \chr{M}_{ki0}=0, \quad \chr{M}_{kij}=-\chr{X}_{kij},
  \end{gather*}
  which immediately gives \eqref{EqChristoffel}.
\end{proof}

\begin{prop}
\label{PropWarpedSubpr}
  For the metric $g$ as in \eqref{EqMetricGeneral}, the subprincipal operator of $\Box_1$ (the tensor wave operator acting on $1$-forms on $M$) in the decomposition \eqref{EqFormDecomp} of 1-forms is given by
  \begin{align*}
    i S_\sub(&\Box_1)(t,x',\sigma,\xi') \\
	 &= \begin{pmatrix}
	     2\alpha^{-2}\sigma\pa_t + \sigma^2\pbconn{X}_{\ham_{\alpha^{-2}}}-\pbconn{X}_{\ham_H} & -2\alpha^{-2}\sigma\,d\alpha \\
		 -2\alpha^{-2}\sigma i_{\conn{X}\alpha} & 2\alpha^{-2}\sigma\pa_t + \sigma^2\ham_{\alpha^{-2}}-\ham_H
	   \end{pmatrix}.
  \end{align*}
\end{prop}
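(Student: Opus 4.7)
My plan is to start from the invariant formula of Proposition~\ref{PropLaplaceSubpr}, namely $iS_\sub(\Box_1)=\pbconn{M}_{\ham_G}$, and then compute the right-hand side in the decomposition \eqref{EqFormDecomp} by exploiting both the product structure of $\ham_G$ and the explicit Christoffel symbols from Lemma~\ref{LemmaChristoffel}.

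First I would decompose the metric function. Since $g^{00}=\alpha^{-2}$, $g^{ij}=-h^{ij}$, $g^{0i}=0$, we have $G(t,x';\sigma,\xi')=\alpha^{-2}\sigma^2-H(x',\xi')$, and the corresponding Hamilton vector field splits cleanly as
\[
  \ham_G=2\alpha^{-2}\sigma\,\pa_t+\sigma^2\ham_{\alpha^{-2}}-\ham_H,
\]
where $\ham_{\alpha^{-2}}$ is purely a $\xi'$-fiber vector field (as $\alpha$ is $t$-independent and has no dependence on $\sigma$) and $\ham_H$ lives entirely in $T^*X$. By linearity of the pullback connection, it suffices to compute the three operators $\pbconn{M}_{\pa_t}$, $\pbconn{M}_{\ham_{\alpha^{-2}}}$ and $\pbconn{M}_{\ham_H}$ acting on $u=u_T+u_N\,\alpha\,dt$, and then add them with the coefficients above.

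Next I would establish that $\pbconn{M}_W$ is diagonal in the $T/N$ decomposition whenever $W$ is tangent to $X$ or lies in a fiber direction of $T^*M$. For the fiber directions this is immediate, since the pullback connection acts componentwise there. For a spatial vector $\pa_j$, the relations $\chr{M}_{jk}^0=0$, $\chr{M}_{j0}^i=0$, $\chr{M}_{jk}^i=\chr{X}_{jk}^i$ and $\chr{M}_{j0}^0=\alpha^{-1}\alpha_j$ from Lemma~\ref{LemmaChristoffel} give
\[
  \conn{M}_{\pa_j}(u_i\,dx^i)=(\conn{X}_{\pa_j}u_T)_i\,dx^i,\qquad \conn{M}_{\pa_j}(\alpha\,dt)=\alpha_j\,dt-\alpha\cdot\alpha^{-1}\alpha_j\,dt=0,
\]
so that $\pbconn{M}_{\ham_H}$ and $\pbconn{M}_{\ham_{\alpha^{-2}}}$ both act diagonally, as $\pbconn{X}_{\ham_H}$ (resp.\ $\pbconn{X}_{\ham_{\alpha^{-2}}}$) on the tangential part and as $\ham_H$ (resp.\ $\ham_{\alpha^{-2}}$) on the coefficient $u_N$ of the normal part. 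This produces the diagonal entries of the claimed matrix.

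The off-diagonal entries all come from the $\pa_t$-term. Here the nontrivial Christoffel symbols $\chr{M}_{00}^k=\alpha h^{k\ell}\alpha_\ell$ and $\chr{M}_{i0}^0=\alpha^{-1}\alpha_i$ mix the tangential and normal parts. Using $\conn{M}_{\pa_t}dx^i=-\chr{M}_{00}^i\,dt=-h^{ij}\alpha_j\cdot\alpha\,dt$ and $\conn{M}_{\pa_t}dt=-\chr{M}_{0i}^0\,dx^i=-\alpha^{-1}\alpha_i\,dx^i$, together with $\pa_t\alpha=0$, a direct computation gives
\[
  \conn{M}_{\pa_t}(u_T)=\pa_t u_T-i_{\conn{X}\alpha}(u_T)\cdot\alpha\,dt,\qquad \conn{M}_{\pa_t}(u_N\alpha\,dt)=\pa_t u_N\cdot\alpha\,dt-u_N\,d\alpha,
\]
so that $\pbconn{M}_{\pa_t}$ is represented by the matrix with diagonal $\pa_t$ and off-diagonal entries $-d\alpha$ (tangential) and $-i_{\conn{X}\alpha}$ (normal). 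Multiplying by $2\alpha^{-2}\sigma$ and adding the two diagonal contributions computed above yields the formula in the statement. The main bookkeeping obstacle will be getting the factor of $\alpha$ in the normal basis vector $\alpha\,dt$ to cancel correctly against the $\chr{M}_{0i}^0=\alpha^{-1}\alpha_i$ and $\chr{M}_{00}^k=\alpha h^{k\ell}\alpha_\ell$ terms, but this cancellation is exactly what makes the off-diagonal entries $-2\alpha^{-2}\sigma\,d\alpha$ and $-2\alpha^{-2}\sigma\,i_{\conn{X}\alpha}$ come out first-order (rather than containing spurious contributions from $u_N$ or $u_T$) and is what I would verify most carefully.
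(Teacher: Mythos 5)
Your proposal is correct and follows essentially the same approach as the paper: both start from $iS_\sub(\Box_1)=\pbconn{M}_{\ham_G}$, split $\ham_G=2\alpha^{-2}\sigma\,\pa_t+\sigma^2\ham_{\alpha^{-2}}-\ham_H$, and compute the pullback connection in the tangential/normal frame directly from the Christoffel symbols of Lemma~\ref{LemmaChristoffel}. The only difference is organizational (you differentiate the basis forms $dx^i$ and $\alpha\,dt$ rather than writing out $\pbconn{M}_\mu u_\nu$ componentwise), but the cancellations and the resulting matrix are identical.
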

\begin{proof}
  We start by computing the form of $\pbconn{M}_\mu u_\nu$ and $\pbconn{M}_{\wt\mu} u_\nu$ for tangential and normal 1-forms. For tangential forms $u=u_\mu\,dx^\mu$ with $u_0=0$, we have
  \begin{gather*}
    \pbconn{M}_0 u_0 = -\chr{M}_{00}^\lambda u_\lambda = -\alpha\la d\alpha,u\ra_H, \quad \pbconn{M}_0 u_i = \pa_0 u_i, \\
	\pbconn{M}_j u_0 = 0, \quad \pbconn{M}_j u_i = \conn{X}_j u_i, \quad \pbconn{M}_{\wt\mu} u_0 = 0, \quad \pbconn{M}_{\wt\mu} u_i = \pa_{\wt\mu} u_i,
  \end{gather*}
  while for normal forms $u=u_\mu\,dx^\mu$ with $u_i=0$ and $u_0=\alpha v$, we compute
  \begin{gather*}
    \pbconn{M}_0 u_0 = \alpha\pa_t v, \quad \pbconn{M}_0 u_i = -\alpha_i v, \\
	\pbconn{M}_j u_0 = \pa_j(\alpha v) - \alpha_j v=\alpha\pa_j v, \quad \pbconn{M}_j u_i = 0, \quad \pbconn{M}_{\wt\mu}u_0=\alpha\pa_{\wt\mu}v, \quad \pbconn{M}_{\wt\mu} u_i=0.
  \end{gather*}
  Since $G=\alpha^{-2}\sigma^2-H$, we find $\ham_G=2\alpha^{-2}\sigma\pa_t + \sigma^2\ham_{\alpha^{-2}} - \ham_H$. Using $\la d\alpha,\cdot\ra_H=i_{\conn{X}\alpha}$, we obtain
  \[
    \pbconn{M}_{\pa_t}=
	  \begin{pmatrix}
	    \pa_t & -d\alpha \\
		-i_{\conn{X}\alpha} & \pa_t
	  \end{pmatrix}.
  \]
  Moreover, for any $f\in\CI(T^*X)$ (we will take $f=\alpha^{-2}$ and $f=H$), viewed as a $(t,\sigma)$-independent function on $T^*M$, we have $\ham_f=f_{\wt j}\pa_j-f_j\pa_{\wt j}$. Hence on tangential forms,
  \[
    \pbconn{M}_{\ham_f} u_0 = 0, \quad \pbconn{M}_{\ham_f} u_i = f_{\wt j}\conn{X}_j u_i - f_j\pa_{\wt j}u_i = \pbconn{X}_{\ham_f} u_i,
  \]
  while on normal forms as above,
  \[
    \pbconn{M}_{\ham_f} u_0 = \alpha f_{\wt j}\pa_j v - \alpha f_j\pa_{\wt j}v = \alpha\ham_f v, \quad \pbconn{M}_{\ham_f} u_i = 0.
  \]
  Thus,
  \[
    \pbconn{M}_{\ham_f}=
	  \begin{pmatrix}
	    \pbconn{X}_{\ham_f} & 0 \\
		0 & \ham_f
	  \end{pmatrix}.
  \]
  The claim follows.
\end{proof}

\subsection{Schwarzschild-de Sitter space}
\label{SubsecSDS}

We stay in the setting of the previous section, and now the spatial metric $h$ has a decomposition
\[
  h = \alpha^{-2}\,dr^2 + r^2\,d\omega^2,
\]
where $d\omega^2$ is the round metric on the unit sphere $Y=\Sph^{n-2}$, with dual metric denoted $\Omega$; see \eqref{EqSDSMetric}. Thus, writing $\xi$, resp.\ $\eta$, for the dual variables of $r$, resp.\ $\omega\in\Sph^{n-2}$, we have $H=\alpha^2\xi^2+r^{-2}|\eta|_\Omega^2$. Write 1-forms on $X$ as
\begin{equation}
\label{EqFormDecompSDS}
  u = u_T + u_N\alpha^{-1}\,dr.
\end{equation}
Abbreviate the derivative of a function $f$ with respect to $r$ by $f'$. Since $d\alpha=\alpha'\,dr$ and $\conn{X}\alpha=\alpha^2\alpha'\pa_r$, we have, in the decomposition \eqref{EqFormDecompSDS},
\[
  d\alpha = \begin{pmatrix} 0 \\ \alpha\alpha' \end{pmatrix}, \quad i_{\conn{X}\alpha} = \begin{pmatrix} 0 & \alpha\alpha' \end{pmatrix}.
\]

We will need the Christoffel symbols of $h$. We continue using the notation to the previous section, except now $x^1=r$ and $\xi_1=\xi$, while $x^2,\ldots,x^n$ are $r$-independent coordinates on $\Sph^{n-2}$, and moreover the lower bound for Greek indices is $1$, and $2$ for Latin indices.
\begin{lemma}
\label{LemmaChristoffelSDS}
  The Christoffel symbols of $X$ are given by:
  \begin{equation}
  \label{EqChristoffelSDS}
  \begin{gathered}
    \chr{X}_{11}^1 = -\alpha^{-1}\alpha', \quad \chr{X}_{i1}^1 = 0, \quad \chr{X}_{ij}^1 = -r\alpha^2(d\omega^2)_{ij}, \\
	\chr{X}_{11}^k = 0, \quad \chr{X}_{i1}^k = r^{-1}\delta_i^k, \quad \chr{X}_{ij}^k = \chr{Y}_{ij}^k.
  \end{gathered}
  \end{equation}
\end{lemma}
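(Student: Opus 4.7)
The plan is to compute the Christoffel symbols of $h = \alpha^{-2}\,dr^2 + r^2\,d\omega^2$ directly from the Koszul-style formula
\[
  \chr{X}_{kij} = \tfrac{1}{2}(\pa_i h_{kj} + \pa_j h_{ki} - \pa_k h_{ij}),
\]
and then raise the first index using $h^{k\ell}$. The computation is essentially the same type of block-diagonal calculation that was carried out for $M$ in Lemma~\ref{LemmaChristoffel}: the metric has no mixed $r$-sphere components, so the only nonzero contributions come from derivatives of $\alpha^{-2}$ with respect to $r$ (giving the $\chr{X}^1_{11}$ term), from derivatives of $r^2$ with respect to $r$ (giving the $\chr{X}^1_{ij}$ and $\chr{X}^k_{i1}$ terms), and from pure sphere derivatives of the angular block (giving the $\chr{X}^k_{ij}=\chr{Y}^k_{ij}$ identification). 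The inverse metric is equally block-diagonal with $h^{11}=\alpha^2$ and $h^{k\ell}=r^{-2}(d\omega^2)^{k\ell}$ for $k,\ell\geq 2$.

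Concretely, I would carry out the four cases in turn. First, $\chr{X}_{111}=\tfrac{1}{2}\pa_r(\alpha^{-2})=-\alpha^{-3}\alpha'$, so $\chr{X}^1_{11}=h^{11}\chr{X}_{111}=-\alpha^{-1}\alpha'$. Second, $\chr{X}_{ki1}$ with $k,i\geq 2$ equals $\tfrac{1}{2}\pa_r h_{ki}=r(d\omega^2)_{ki}$, and raising with $h^{k\ell}=r^{-2}(d\omega^2)^{k\ell}$ yields $\chr{X}^k_{i1}=r^{-1}\delta^k_i$; the mixed lower-index cases $\chr{X}_{i1}^1$ and $\chr{X}^k_{11}$ vanish because $h_{1k}=0$ and $h_{11}$ is independent of the sphere coordinates. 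Third, for $i,j\geq 2$, $\chr{X}_{1ij}=-\tfrac{1}{2}\pa_r(r^2(d\omega^2)_{ij})=-r(d\omega^2)_{ij}$, so $\chr{X}^1_{ij}=-r\alpha^2(d\omega^2)_{ij}$. Finally, for $i,j,k\geq 2$, the $r$-factors in $h_{ij}=r^2(d\omega^2)_{ij}$ and $h^{k\ell}=r^{-2}(d\omega^2)^{k\ell}$ cancel exactly when forming $\chr{X}^k_{ij}$, leaving the expression built purely from the unit-sphere metric; this is by definition $\chr{Y}^k_{ij}$.

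There is really no obstacle: the metric is a (Riemannian) warped product whose warping functions depend only on $r$, and the claim is just the standard formula for Christoffel symbols of such a product. One could alternatively appeal to a general warped-product formula, but since only six entries need to be checked, writing them out directly is cleaner and matches the style of Lemma~\ref{LemmaChristoffel}. The only care required is keeping track of the two different warping factors ($\alpha^{-2}$ on the $dr^2$ piece and $r^2$ on the angular piece), which is a bookkeeping issue rather than a conceptual one.
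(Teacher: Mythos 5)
Your proof is correct and follows essentially the same route as the paper: compute the lowered Christoffel symbols $\chr{X}_{kij}$ from the Koszul formula (exploiting the block-diagonal structure and $r$-independence of $(d\omega^2)_{ij}$), then raise the first index with the block-diagonal inverse metric $h^{11}=\alpha^2$, $h^{k\ell}=r^{-2}(d\omega^2)^{k\ell}$. All six entries check out.
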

\begin{proof}
  We have $h_{11}=\alpha^{-2}$, $h_{1i}=h_{i1}=0$ and $h_{ij}=r^2(d\omega^2)_{ij}$, and $(d\omega^2)_{ij}$ is $r$-independent. We then compute
  \begin{gather*}
    \chr{X}_{111} = -\alpha^{-3}\alpha', \quad \chr{X}_{1i1} = 0, \quad \chr{X}_{1ij} = -r(d\omega^2)_{ij}, \\
    \chr{X}_{k11} = 0, \quad \chr{X}_{ki1} = r(d\omega^2)_{ki}, \quad \chr{X}_{kij} = r^2\chr{Y}_{kij},
  \end{gather*}
  which immediately gives \eqref{EqChristoffelSDS}.
\end{proof}

We are only interested in the subprincipal operator of $\Box_1$ at the trapped set, which we recall from \eqref{EqTrappedSet} to be the set
\begin{equation}
\label{EqTrappedSetRecall}
  \Gamma = \{ r=r_p, \xi=0, \sigma^2=\Psi^2|\eta|^2 \}, \quad \tn{where }\Psi=\alpha r^{-1}, \Psi'(r_p)=0.
\end{equation}
Thus, at $\Gamma$, we have
\[
  \ham_H = 2\alpha^2\xi\pa_r-2\alpha\alpha'\xi^2\pa_\xi + 2r^{-3}|\eta|^2\pa_\xi + r^{-2}\ham_{|\eta|^2} = 2r^{-3}|\eta|^2\pa_\xi + r^{-2}\ham_{|\eta|^2},
\]
while $\sigma^2\ham_{\alpha^{-2}}=2\sigma^2\alpha^{-3}\alpha'\pa_\xi$. Now $\alpha^{-1}\alpha'=(r\Psi)^{-1}(r\Psi)'=r^{-1}$ at $r=r_p$, therefore $\sigma^2\alpha^{-3}\alpha'=r^{-3}|\eta|^2$, and we thus obtain
\begin{equation}
\label{EqHamAtTrapping}
  \sigma^2\ham_{\alpha^{-2}}-\ham_H = -r^{-2}\ham_{|\eta|^2}\tn{ at }\Gamma.
\end{equation}
Notice that $|\eta|^2\in\CI(T^*Y)$ is independent of $(r,\xi)$.
\begin{lemma}
\label{LemmaSphericalSliceSubpr}
  For a function $f\in\CI(T^*Y)$, viewed as an $(r,\xi)$-independent function on $X$, we have
  \[
    \pbconn{X}_{\ham_f}=
	  \begin{pmatrix}
	    \pbconn{Y}_{\ham_f} & \alpha r(i_{\ham_f}d\omega^2) \\
		-\alpha r^{-1} i_{\ham_f} & \ham_f
	  \end{pmatrix}.
  \]
  in the decomposition \eqref{EqFormDecompSDS} of 1-forms on $X$.
\end{lemma}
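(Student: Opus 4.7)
The proof should closely mirror Proposition~\ref{PropWarpedSubpr}: the warped product structure $X = (r_-, r_+)_r \times \Sph^{n-2}_\omega$ with metric $h = \alpha^{-2}\,dr^2 + r^2\,d\omega^2$ is formally analogous to $M = \R_t \times X$ with metric $g = \alpha^2\,dt^2 - h$, so one repeats the tangential/normal analysis of the pullback connection, now using the Christoffel symbols supplied by Lemma~\ref{LemmaChristoffelSDS}.

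The plan is as follows. First, because $f \in \CI(T^*Y)$ is independent of $(r, \xi)$, the Hamiltonian vector field reduces to
\[
  \ham_f = f_{\wt j}\,\pa_j - f_j\,\pa_{\wt j},
\]
where from now on Latin indices $i, j, k \geq 2$ denote sphere directions. Consequently $\pbconn{X}_{\ham_f} u = f_{\wt j}\,\pbconn{X}_j u - f_j\,\pa_{\wt j} u$, and the task reduces to computing $\pbconn{X}_j$ (for sphere indices) on tangential versus normal 1-forms. For a tangential form $u = u_T$ (so $u_1 = 0$, $u_i$ for $i \geq 2$), the only Christoffel symbols that enter $\pbconn{X}_j u_\nu = \pa_j u_\nu - \chr{X}_{j\nu}^\lambda u_\lambda$ are $\chr{X}_{j1}^k = r^{-1}\delta_j^k$ (contributing to the $dr$-component) and $\chr{X}_{ji}^k = \chr{Y}_{ji}^k$ (giving the sphere connection). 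For a purely normal form $u = u_N\,\alpha^{-1}\,dr$, so $u_1 = \alpha^{-1}u_N$ and $u_i = 0$, the relevant symbols are $\chr{X}_{j1}^1 = 0$ and $\chr{X}_{ji}^1 = -r\alpha^2 (d\omega^2)_{ji}$.

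Assembling these, on tangential forms one gets $(\pbconn{X}_{\ham_f} u_T)_i = \pbconn{Y}_{\ham_f}(u_T)_i$ in the sphere components (diagonal top-left entry) and $(\pbconn{X}_{\ham_f} u_T)_1 = -r^{-1} f_{\wt j} (u_T)_j = -r^{-1} i_{\ham_f} u_T$ in the $\pa_r$ component; multiplying by $\alpha$ to convert from the $u_1$ coefficient to the $u_N$ coefficient (using $u_N = \alpha u_1$) produces the bottom-left entry $-\alpha r^{-1} i_{\ham_f}$. On normal forms one finds $(\pbconn{X}_{\ham_f} u)_1 = \ham_f(\alpha^{-1} u_N) = \alpha^{-1}\ham_f(u_N)$ since $\ham_f$ annihilates $\alpha$, giving the diagonal bottom-right entry $\ham_f$ on $u_N$; the off-diagonal piece $(\pbconn{X}_{\ham_f} u)_i = r\alpha^2 (d\omega^2)_{ji} f_{\wt j}\cdot \alpha^{-1} u_N = \alpha r (i_{\ham_f} d\omega^2)_i\,u_N$ furnishes the top-right entry $\alpha r (i_{\ham_f} d\omega^2)$.

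The main obstacle is really just the bookkeeping: one must consistently track the rescaling between the $dr$-coefficient $u_1$ and the normalized component $u_N = \alpha u_1$ on both sides of the matrix, and one must correctly identify the combinations $(d\omega^2)_{ji}\,f_{\wt j}$ and $f_{\wt j}(u_T)_j$ as $i_{\ham_f} d\omega^2$ and $i_{\ham_f} u_T$ respectively. Once these conventions are fixed, the four entries of the matrix read off directly from the four cases above.
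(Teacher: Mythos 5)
Your proposal is correct and follows the paper's proof essentially verbatim: both split the Hamilton vector field as $\ham_f = f_{\wt j}\pa_j - f_j\pa_{\wt j}$, compute $\pbconn{X}_j$ separately on tangential and normal $1$-forms using the Christoffel symbols from Lemma~\ref{LemmaChristoffelSDS}, and then assemble the four matrix entries while accounting for the $\alpha^{-1}\,dr$ normalization and the identification of $f_{\wt j}(u_T)_j$ and $(d\omega^2)_{ji}f_{\wt j}$ with $i_{\ham_f}u_T$ and $i_{\ham_f}d\omega^2$. The computations, including the vanishing of the $\chr{X}_{j1}^1$-correction in the normal/normal entry, all agree with the paper.
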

\begin{proof}
  On tangential forms $u$, i.e.\ $u_1=0$, we have
  \[
    \pbconn{X}_j u_1=-r^{-1}u_j, \quad \pbconn{X}_j u_i = \conn{Y}_j u_i, \quad \pbconn{X}_{\wt j}u_1=0, \quad \pbconn{X}_{\wt j}u_i = \pa_{\wt j}u_i,
  \]
  thus in view of $\ham_f=f_{\wt j}\pa_j - f_j\pa_{\wt j}$, we get, using that $\pi^*T^*X$ can be canonically identified with the horizontal subbundle of $T^*(T^*X)$:
  \[
    \pbconn{X}_{\ham_f}u_1 = -r^{-1}f_{\wt j}u_j = -r^{-1}u(\ham_f) = -r^{-1}i_{\ham_f} u, \quad \pbconn{X}_{\ham_f}u_i = \pbconn{Y}_{\ham_f}u_i.
  \]
  On normal forms $u$, i.e.\ $u_1=\alpha^{-1}v$, $u_i=0$, we compute
  \[
    \pbconn{X}_j u_1=\alpha^{-1}\pa_j v, \quad \pbconn{X}_j u_i=r\alpha(d\omega^2)_{ij}v, \quad \pbconn{X}_{\wt j}u_1=\alpha^{-1}\pa_{\wt j}v, \quad \pbconn{X}_{\wt j}u_i=0,
  \]
  hence
  \begin{gather*}
    \pbconn{X}_{\ham_f}u_1 = \alpha^{-1}f_{\wt j}\pa_j v - \alpha^{-1}f_j\pa_{\wt j}v = \alpha^{-1}\ham_f v, \\
	\pbconn{X}_{\ham_f}u_i=f_{\wt j} r\alpha(d\omega^2)_{ij}v = \alpha r(i_{\ham_f}d\omega^2)v.
  \end{gather*}
  The claim follows immediately.
\end{proof}

Combining Proposition~\ref{PropWarpedSubpr} and Lemma~\ref{LemmaSphericalSliceSubpr}, we can thus compute the subprincipal operator of $\Box_1$ acting on 1-forms (sections of the pullback of $T^*M$ to $T^*M\setminus 0$) decomposed as
\begin{equation}
\label{EqFormDecompFull}
  u = u_{TT} + u_{TN}\alpha^{-1}\,dr + u_N \alpha\,dt.
\end{equation}
In view of \eqref{EqHamAtTrapping}, we merely need to apply Lemma~\ref{LemmaSphericalSliceSubpr} to $f=|\eta|^2$, in which case $\ham_f=2\Omega^{jk}\eta_j\pa_k - \pa_\ell\Omega^{jk}\eta_j\eta_k\pa_{\wt\ell}$, so $i_{\ham_f}=2i_\eta$ on 1-forms (identifying the 1-form $\eta$ with a tangent vector using the metric $d\omega^2$), while $i_{\ham_f}d\omega^2=2\eta$. Thus, we obtain:
\begin{prop}
\label{PropSDSSubpr}
  In the decomposition~\eqref{EqFormDecompFull}, the subprincipal operator of $\Box_1$ on Schwarzschild-de Sitter space at the trapped set $\Gamma$ is given by
  \begin{equation}
  \label{EqSDSSubpr}
  \begin{split}
    i S_\sub(&\Box_1) \\
	  &=\begin{pmatrix}
	    2\alpha^{-2}\sigma\pa_t - r^{-2}\pbconn{Y}_{\ham_{|\eta|^2}} & -2\alpha r^{-1}\eta & 0 \\
		2\alpha r^{-3}i_\eta & 2\alpha^{-2}\sigma\pa_t - r^{-2}\ham_{|\eta|^2} & -2r^{-1}\sigma \\
		0 & -2r^{-1}\sigma & 2\alpha^{-2}\sigma\pa_t - r^{-2}\ham_{|\eta|^2}
	  \end{pmatrix}.
  \end{split}
  \end{equation}
\end{prop}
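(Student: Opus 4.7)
The plan is to combine Proposition~\ref{PropWarpedSubpr} with Lemma~\ref{LemmaSphericalSliceSubpr}, evaluating the result at the trapped set $\Gamma$. Starting from the two-by-two matrix form of $iS_\sub(\Box_1)$ given in Proposition~\ref{PropWarpedSubpr} in the $(u_T,u_N)$ decomposition of \eqref{EqFormDecomp}, the main simplification is that the pullback connection is $\CI(T^*M)$-linear in the vector field argument, so
\[
  \sigma^2\pbconn{X}_{\ham_{\alpha^{-2}}} - \pbconn{X}_{\ham_H} = \pbconn{X}_{\sigma^2\ham_{\alpha^{-2}}-\ham_H}.
\]
Hence by \eqref{EqHamAtTrapping}, the upper-left and lower-right diagonal entries of the two-by-two matrix become, at $\Gamma$,
\[
  2\alpha^{-2}\sigma\pa_t - r^{-2}\pbconn{X}_{\ham_{|\eta|^2}} \quad\text{and}\quad 2\alpha^{-2}\sigma\pa_t - r^{-2}\ham_{|\eta|^2}
\]
respectively.

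Next I would evaluate the off-diagonal entries $-2\alpha^{-2}\sigma\,d\alpha$ and $-2\alpha^{-2}\sigma\,i_{\conn{X}\alpha}$ at $\Gamma$. Using the computation recorded just before Lemma~\ref{LemmaChristoffelSDS}, namely $d\alpha=(0,\alpha\alpha')^T$ and $i_{\conn{X}\alpha}=(0,\alpha\alpha')$ in the $(u_{TT},u_{TN})$ decomposition of $u_T$ from \eqref{EqFormDecompSDS}, together with the identity $\alpha^{-1}\alpha'=(r\Psi)^{-1}(r\Psi)'=r^{-1}$ at $r=r_p$ (already derived above \eqref{EqHamAtTrapping}), these off-diagonal entries simplify to $(0,-2r^{-1}\sigma)^T$ and $(0,-2r^{-1}\sigma)$, producing the zeros in the first row/column and the $-2r^{-1}\sigma$ entries coupling $u_{TN}$ and $u_N$ in \eqref{EqSDSSubpr}.

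It then remains to further decompose the top-left entry $-r^{-2}\pbconn{X}_{\ham_{|\eta|^2}}$ on tangential forms $u_T$ into a two-by-two block on $(u_{TT},u_{TN})$. Since $|\eta|^2\in\CI(T^*Y)$ is independent of $(r,\xi)$, Lemma~\ref{LemmaSphericalSliceSubpr} applies directly with $f=|\eta|^2$, giving
\[
  \pbconn{X}_{\ham_{|\eta|^2}} = \begin{pmatrix} \pbconn{Y}_{\ham_{|\eta|^2}} & \alpha r\,(i_{\ham_{|\eta|^2}}d\omega^2) \\ -\alpha r^{-1}i_{\ham_{|\eta|^2}} & \ham_{|\eta|^2} \end{pmatrix}.
\]
Inserting $i_{\ham_{|\eta|^2}}d\omega^2=2\eta$ and $i_{\ham_{|\eta|^2}}=2i_\eta$ on 1-forms (as noted immediately before the statement of the proposition), multiplying by $-r^{-2}$, and assembling the three blocks into a three-by-three matrix in the decomposition \eqref{EqFormDecompFull} yields exactly \eqref{EqSDSSubpr}.

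The only genuinely delicate point is the interpretation of the equality of first-order differential operators ``at $\Gamma$''. This is legitimate because $\pbconn{X}_V$ depends on the vector field $V$ only through its pointwise values together with the (undifferentiated) values of $V$ on sections; consequently, equality of the Hamilton vector fields $\sigma^2\ham_{\alpha^{-2}}-\ham_H$ and $-r^{-2}\ham_{|\eta|^2}$ at $\Gamma$, together with the vanishing of $\xi$ and the stationarity of $\Psi$ at $r_p$ used to derive \eqref{EqHamAtTrapping}, suffices to identify the two operators as coefficients of the subprincipal operator evaluated at $\Gamma$; this is the only place where the specific trapped-set conditions $\xi=0$, $r=r_p$ and $\sigma^2=\Psi^2|\eta|^2$ are required.
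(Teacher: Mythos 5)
Your proposal is correct and takes essentially the same approach as the paper: combining Proposition~\ref{PropWarpedSubpr} with Lemma~\ref{LemmaSphericalSliceSubpr} for $f=|\eta|^2$, using $\CI$-linearity of the connection together with \eqref{EqHamAtTrapping}, the substitution $\alpha^{-1}\alpha'=r^{-1}$ at $r=r_p$, and the identities $i_{\ham_{|\eta|^2}}=2i_\eta$, $i_{\ham_{|\eta|^2}}d\omega^2=2\eta$. Your closing remark on why ``evaluation at $\Gamma$'' is legitimate is a sensible observation that the paper leaves implicit.
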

Since $\Box_1$ is symmetric with respect to the natural inner product $G$ on the 1-form bundle, which in the decomposition \eqref{EqFormDecompFull} is an orthogonal direct sum of inner products, $G=(-r^{-2}\Omega) \oplus (-1) \oplus 1$, the operator $S_\sub(\Box_1)$ is a symmetric operator acting on sections of $\pi^*T^*M$ over $T^*M\setminus 0$ if we equip $\pi^*T^*M$ with the fiber inner product $G$ and use the symplectic volume density on $T^*M\setminus 0$.

The matrix $-2r^{-2} s$, with
\[
  s = \begin{pmatrix}
        0 & \Psi r^2\eta & 0 \\
		-\Psi i_\eta & 0 & r\sigma \\
		0 & r\sigma & 0
      \end{pmatrix},
\]
of $0$-th order terms of $S_\sub(\Box_1)$ is nilpotent, which suggests in analogy to the discussion in \S\ref{SubsecToyCase} that the imaginary part of $S_\sub(\Box_1)$ with respect to a \emph{Riemannian} fiber inner product can be made arbitrarily small. Indeed, for any fixed $\eps>0$, define the `change of basis matrix'
\[
  q=\begin{pmatrix}
      \id & 0 & 0 \\
	  0 & \eps^{-1}\Psi r^2 & 0 \\
	  -\eps^{-2}|\eta|^{-1}\Psi^2 r^2 i_\eta & 0 & \eps^{-2}|\eta|^{-1}\Psi r^3\sigma
    \end{pmatrix},
\]
then
\[
  qsq^{-1}
  =\begin{pmatrix}
     0 & \eps\eta & 0 \\
	 0 & 0 & \eps|\eta| \\
	 0 & 0 & 0
   \end{pmatrix}.
\]
In order to compute $q S_\sub(\Box_1) q^{-1}$, we note that the diagonal matrix of $t$-derivatives in \eqref{EqSDSSubpr} commutes with $q$, and it remains to study the derivatives along $\ham_{|\eta|^2}$; more specifically, $q$ has a block structure, with the columns and rows $1,3$ being the first block and the $(2,2)$ entry the second, and the $(2,2)$ block is an $\eta$-independent multiple of the identity, hence commutes with the relevant $(2,2)$ entry $i r^{-2}\ham_{|\eta|^2}$ of $S_\sub(\Box_1)$. For the $1,3$ block, we compute
\begin{equation}
\label{EqBlock13}
\begin{split}
  \Biggl[\begin{pmatrix}\pbconn{Y}_{\ham_{|\eta|^2}} & 0 \\ 0 & \ham_{|\eta|^2} \end{pmatrix},& \begin{pmatrix} \id & 0 \\ -\eps^{-2}|\eta|^{-1}\Psi^2 r^2i_\eta & \eps^{-2}|\eta|^{-1}\Psi r^3\sigma \end{pmatrix} \Biggr] \\
  &\quad = \eps^{-2}\Psi^2 r^2|\eta|^{-1} \begin{pmatrix} 0 & 0 \\ i_\eta\pbconn{Y}_{\ham_{|\eta|^2}}-\ham_{|\eta|^2}i_\eta & 0 \end{pmatrix}.
\end{split}
\end{equation}
Now $\pbconn{Y}_{\ham_{|\eta|^2}}$ and $\ham_{|\eta|^2}$ are the restrictions of the pullback connection $\conn{\pi^*\Lambda\Sph^{n-2}}_{\ham_{|\eta|^2}}$ of the full form bundle to 1-forms and functions, respectively, and the latter commutes with $i_\eta$, since by Proposition~\ref{PropImagSymbolInv},
\[
  0 = S_\sub([\Box,\delta]) = -i[S_\sub(\Box),i_\eta] = -\bigl[\conn{\pi^*\Lambda\Sph^{n-2}}_{\ham_{|\eta|^2}},i_\eta\bigr],
\]
where $\Box$ denotes the Hodge d'Alembertian on the form bundle and $\delta$ is the codifferential. Thus, \eqref{EqBlock13} in fact vanishes, and therefore
\begin{align*}
  q &S_\sub(\Box_1)q^{-1} \\
  &=-i\begin{pmatrix}
    2\alpha^{-2}\sigma\pa_t - r^{-2}\pbconn{Y}_{\ham_{|\eta|^2}} & -2r^2\eps\eta & 0 \\
	0 & 2\alpha^{-2}\sigma\pa_t - r^{-2}\ham_{|\eta|^2} & -2r^2\eps|\eta| \\
	0 & 0 & 2\alpha^{-2}\sigma\pa_t - r^{-2}\ham_{|\eta|^2}
  \end{pmatrix}.
\end{align*}
Equip the 1-form bundle over $M$ in the decomposition~\eqref{EqFormDecompFull} with the Hermitian inner product
\begin{equation}
\label{EqRiemannianInnerSDS}
  B_0=\Omega\oplus 1\oplus 1,
\end{equation}
then $qS_\sub(\Box_1)q^{-1}$ has imaginary part (with respect to $B_0$) of size $\cO(\eps)$. Put differently, $S_\sub(\Box_1)$ has imaginary part of size $\cO(\eps)$ relative to the Hermitian inner product $b:=B_0(q\cdot,q\cdot)$, which is the symbol of a pseudodifferential inner product on $\pi^*T^*M$. We can now invoke Lemma~\ref{LemmaNilpotentInheritance} on a neighborhood of $\Gamma\cap\{|\sigma|=1\}$ and use the homogeneity of $q,b$ and $S_\sub(\Box_1)$ to obtain:

\begin{thm}
\label{ThmSDSNilpotentAtTrapping}
  For any $\eps>0$, there exists a (positive definite) $t_*$-independent pseudodifferential inner product $B=b(x,D)$ on $\cT_k M$ (thus, $b$ is an inner product on $\pi^*\cT_k M$, homogeneous of degree $0$ with respect to dilations in the base $T^*M\setminus 0$), such that
  \[
    \sup_\Gamma |\sigma|^{-1}\left\|\frac{1}{2i}(S_\sub(\Box_k)-S_\sub(\Box_k)^{*b})\right\|_b \leq \eps,
  \]
  where $\Gamma$ is the trapped set \eqref{EqTrappedSetRecall}. Put differently, there is an elliptic \psdo{}\ $Q$, invariant under $t_*$-translations, acting on sections of $\cT_k M$, with paramet\-rix $Q^-$, such that relative to the ordinary positive definite inner product~\eqref{EqRiemannianInnerSDS}, we have
  \[
    \sup_\Gamma |\sigma|^{-1}\left\|\sigma_1\left(\frac{1}{2i}(Q\Box_k Q^- - (Q\Box_k Q^-)^{*B_0})\right)\right\|_{B_0} \leq \eps.
  \]
  By restriction, the analogous statements are true for $\Box$ acting on subbundles of the tensor bundle on $M$, for instance differential forms of all degrees and symmetric 2-tensors.
\end{thm}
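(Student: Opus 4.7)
My plan is to leverage the explicit computations already carried out in Section~\ref{SubsecSDS} leading up to the theorem. For the $k=1$ case, I would take the inner product $b := B_0(q\,\cdot\,,q\,\cdot\,)$ on $\pi^*T^*M$ over a conic neighborhood of $\Gamma\cap\{|\sigma|=1\}$, where $B_0$ is the positive definite inner product \eqref{EqRiemannianInnerSDS} and $q$ is the $\eps$-dependent change-of-basis matrix written out in the excerpt; positive definiteness of $b$ is automatic from that of $B_0$ and the invertibility of $q$. I would extend $b$ homogeneously of degree $0$ along rays in the fibers of $T^*M\setminus 0$, and then extend it (in an arbitrary but $t_*$-invariant manner, preserving positive definiteness) to a smooth, homogeneous, positive definite inner product on all of $\pi^*T^*M$; the construction is automatically $t_*$-independent because $q$ involves only $r,\alpha,\eta,\sigma$ and not $t$. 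Quantizing $b$ (and symmetrizing) produces a $\Psi$-inner product $B$ as in Definition~\ref{DefPsdoInnerProduct}.

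To verify the bound at $\Gamma$, I would appeal to the calculation performed just before the theorem, where the key cancellation is the vanishing of the commutator in \eqref{EqBlock13} using that $[\conn{\pi^*\Lambda\Sph^{n-2}}_{\ham_{|\eta|^2}},i_\eta]=0$. This gives $qS_\sub(\Box_1)q^{-1}$ as a strictly upper-triangular matrix (modulo the common real diagonal term $2\alpha^{-2}\sigma\pa_t-r^{-2}\ham_{|\eta|^2}$, which is formally self-adjoint with respect to $B_0$ and the symplectic density) whose off-diagonal entries carry a prefactor of $\eps$. By Proposition~\ref{PropImagSymbolInv}\,(1), conjugation by $q$ corresponds precisely to replacing $b$ by $B_0$ on the level of symbols, so $\|\Im^b S_\sub(\Box_1)\|_b=\|\Im^{B_0}(qS_\sub(\Box_1)q^{-1})\|_{B_0}$, and the upper-triangular $\cO(\eps)$ structure yields the desired bound at $\Gamma\cap\{|\sigma|=1\}$; homogeneity of $S_\sub(\Box_1)$ (of degree $1$) and the $|\sigma|^{-1}$ normalization in the theorem then propagate the bound along the entire conic set $\Gamma$.

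For general $k$, I would apply Lemma~\ref{LemmaNilpotentInheritance} with $f=G$ (so $\pbconn{M}_{\ham_G}=iS_\sub(\Box_1)$ by Proposition~\ref{PropLaplaceSubpr}) and the inner product $b$ just constructed: the lemma shows that $b_k:=\bigotimes^k b$ on $\pi^*\cT_k M$ realizes $\Im^{b_k}S_\sub(\Box_k)$ with operator norm at most $k$ times the $k=1$ bound, so replacing $\eps$ by $\eps/k$ at the outset yields the theorem for arbitrary $k$. Restriction to invariant subbundles (symmetric tensors, differential forms) is immediate because these subbundles are preserved by $\conn{\cT_k M}$, hence by $S_\sub(\Box_k)$, and one simply restricts $b_k$ to them. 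Finally, the reformulation in terms of conjugation by an elliptic \psdo{} $Q$ follows from Proposition~\ref{PropPsdoInnerAsConjugation}: given the positive definite $B_0$ on $\cT_k M$ (e.g.\ the tensor power of \eqref{EqRiemannianInnerSDS}), the proposition furnishes an elliptic $Q\in\Psi^0$ with $B=Q^*B_0Q$ modulo $\Psi^{-\infty}$, and the identity \eqref{EqPsdoInnerToNormal} (together with the symbolic statement \eqref{EqSubprChangeOfBasis}) translates the $\Psi$-inner product bound into the stated bound on $\sigma_1\bigl(\tfrac{1}{2i}(Q\Box_k Q^- - (Q\Box_k Q^-)^{*B_0})\bigr)$; the $t_*$-independence of $Q$ is inherited from that of $b$ by following through the iterative construction in the proof of Proposition~\ref{PropPsdoInnerAsConjugation}.

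The only conceptually delicate step is the vanishing of the commutator \eqref{EqBlock13}, which is the source of the nilpotent (rather than merely small) structure of the off-diagonal part of $S_\sub(\Box_1)$ at $\Gamma$; once this is in hand, the rest is bookkeeping with homogeneity, tensor powers, and the general machinery of Sections~\ref{SecPsdoInner}--\ref{SecSubprincipalLaplace}. I expect this identity—which rests on the fact that the codifferential intertwines the Hodge Laplacian with itself—to be the main structural input, whereas the choice of $q$ and the verification of the bounds are then essentially algebraic.
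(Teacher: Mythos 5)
Your proposal is correct and follows essentially the same route as the paper's argument: the explicit conjugation matrix $q$, the vanishing of the commutator \eqref{EqBlock13}, the definition $b := B_0(q\,\cdot\,,q\,\cdot\,)$ on a conic neighborhood of $\Gamma$, Lemma~\ref{LemmaNilpotentInheritance} (with $f=G$) for general $k$, and Proposition~\ref{PropPsdoInnerAsConjugation} for the reformulation via the elliptic operator $Q$. The only minor slip is attributing the symbolic identity $\|\Im^b S_\sub(\Box_1)\|_b=\|\Im^{B_0}(qS_\sub(\Box_1)q^{-1})\|_{B_0}$ to Proposition~\ref{PropImagSymbolInv}\,(1) rather than to \eqref{EqSubprChangeOfBasis} combined with the elementary observation that conjugation by $q$ intertwines $b$-adjoints and $B_0$-adjoints when $b=q^*B_0 q$, which is really the symbolic content of Proposition~\ref{PropPsdoInnerAsConjugation}.
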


By the $t_*$-translation invariance of the involved symbols, inner products and operators, this is really a statement about $\Psib$-inner products, and $Q$ is a b-pseudo\-differential operator; see the discussion preceding Theorem~\ref{ThmWaveExpansion} for the relationship of the stationary and the b-picture.

\begin{rmk}
\label{RmkFirstOrderPerturbation}
  Adding a $0$-th order term to $\Box$ does not change $\Box$ or its imaginary part at the principal symbol level, thus does not affect the subprincipal operator of $\Box$ either; therefore, Theorem~\ref{ThmSDSNilpotentAtTrapping} holds in this case as well.
  
  Adding a first order operator $L$ (acting on sections of $\cT_k M$), which we assume to be $t$-independent for simplicity, does affect the subprincipal operator, more specifically its $0$-th order part, since $S_\sub(\Box+L)=S_\sub(\Box)+\sigma_1(L)$. Thus, if $\sigma_1(L)$ is small at $\Gamma$, we can use the same $\Psi$-inner product as for $\Box$ and obtain a bound on $\Im^b S_\sub(\Box+L)$ which is small, but no longer arbitrarily small. However, the bound merely needs to be smaller than $\numin/2$, see \eqref{EqBoundAtTrapping}, which does hold for small $L$.

  If we do not restrict the size of $L$, we can still obtain a spectral gap, provided one can choose a $\Psi$-inner product as in Theorem~\ref{ThmSDSNilpotentAtTrapping}, again with $\eps>0$ sufficiently (but not necessarily arbitrarily) small. This is the case if the $0$-th order part of $S_\sub(\Box+L)$ is nilpotent (or has small eigenvalues) and can be conjugated in a $t$-independent manner to an operator which is sufficiently close to being symmetric, in the sense that it satisfies the bound \eqref{EqBoundAtTrapping} with $\Box$ replaced by $\Box+L$.
\end{rmk}

We remark that the subprincipal operator $iS_\sub(\Box)=\ham_G+i\sigma_\sub(G)$ induces a notion of parallel transport on $\pi^*\cT_k M$ along the Hamilton flow of $\ham_G$. As a consequence of the nilpotent structure of $S_\sub(\Box)$ at the trapped set, parallel sections along the trapped set grow only polynomially in size (with respect to a fixed $t$-invariant positive definite inner product), rather than exponentially. Parallel sections as induced by $S_\sub(\Box+L)$, with $L$ as in Remark~\ref{RmkFirstOrderPerturbation}, may grow exponentially, with their size bounded by $C e^{\kappa|\sigma|t}$ for some constants $C>0$ and $\kappa$, where the additional factor of $|\sigma|$ in the exponent accounts for the homogeneity of the parallel transport. If such a bound does not hold for any $\kappa<\numin/2$, the dispersion of waves concentrated at the trapped set caused by the normally hyperbolic nature of the trapping is expected to be too weak to counteract the exponential growth caused by the subprincipal part of $\Box+L$, and correspondingly one does not expect a spectral gap. Notice that the growth of parallel sections is an averaged condition in that it involves the behavior of the parallel transport for large times, while the choice of $\Psi$-inner products as explained above is a local condition and depends on the pointwise structure of $S_\sub(\Box)$; thus, establishing spectral gaps only using averaged data is an interesting natural problem, even in the scalar setting.


\end{document}